\newcommand{\p} {\textnormal{\textsf{P}}}
\newtheorem{hypo}{Hypothesis}
\newtheorem{prop}[hypo]{Proposition}
\newtheorem{thm}[hypo]{Theorem}
\newtheorem{lem}[hypo]{Lemma}
\newtheorem{rqe}[hypo]{Remark}
\newtheorem{coro}[hypo]{Corollary}
\def\PP{\mathbb{P}}
\def\EE{\mathbb{E}}
\let\BFseries\bfseries\def\bfseries{\BFseries\mathversion{bold}} % formulas in headings bold
\def\deq{\stackrel{\stackrel{\mathcal{L}}{=}}{}}
\def\onotation{O}
\def \ind {{\bf 1}}
\newcommand {\acc}[1] {\left\{ {#1} \right\}}
\def\dd{\mbox{d}}
\def\P{\mathbb{P}}
\title{Persistence probabilities for stationary increment processes}
\begin{document}
 \author{Frank Aurzada} 
\address{AG Stochastik, Fachbereich Mathematik, Technische Universit\"at Darmstadt, Schlossgartenstr.\ 7, 64289 Darmstadt, Germany.}
\email{aurzada@mathematik.tu-darmstadt.de}

\author{Nadine Guillotin-Plantard} 
\address{Institut Camille Jordan, CNRS UMR 5208, Universit\'e de Lyon, Universit\'e Lyon 1, 43, Boulevard du 11 novembre 1918, 69622 Villeurbanne, France.}
\email{nadine.guillotin@univ-lyon1.fr}

\author{Fran\c{c}oise P\`ene}
\address{Universit\'e de Brest and IUF,
LMBA, UMR CNRS 6205, 29238 Brest cedex, France}
\email{francoise.pene@univ-brest.fr}

\subjclass[2000]{60F05; 60G52}
\keywords{fractional Brownian motion ; Random walk in random scenery ; persistence }

\begin{abstract}
We study the persistence probability for processes with stationary increments. Our results apply to a number of examples: sums of stationary correlated random variables whose scaling limit is fractional Brownian motion; random walks in random sceneries; random processes in Brownian scenery; and the Matheron-de Marsily model in $\mathbb Z^2$ with random orientations of the horizontal layers.
Using a new approach, strongly related to the study of the range, we obtain an upper bound of the optimal order in general and improved lower bounds (compared to previous literature) for many specific processes. 
 \end{abstract}

\maketitle

\section{Introduction}
Persistence concerns the probability that a stochastic process has a long negative excursion. In this paper, we are concerned mainly with discrete-time processes. If $Z=(Z_n)_{n\ge 0}$ is a stochastic process defined on a probability space $(\Omega,\mathcal F,\mathbb P)$, we study the rate of decay of the probability
$$
\PP\left( Z_n^* \leq a \right),\qquad \text{as $n\to+\infty$},\quad\mbox{with}\quad Z_n^*:= \max_{k=1,\ldots, n}Z_k\, ,
$$
for $a\in\mathbb R$.
In many cases of interest, the above probability decreases polynomially, i.e.\ as $n^{-\theta+o(1)}$, and it is the first goal to find the persistence exponent $\theta$. For a recent overview on this subject, we refer to the survey \cite{AS} and for the relevance in theoretical physics we recommend \cite{Maj1,BMS13}. 

%For Markov processes persistence probabilities are well studied; the proofs usually strongly rely on the Markov property. For example,
For random walks, L\'evy processes, and a number of other Markov processes, persistence probabilities are well studied. E.g.\ for random walks, one can obtain the persistence probabilities with the help of the more general fluctuation theory.
Since \cite{Molchan1999}, persistence probabilities for intrinsically non-Markovian processes such as fractional Brownian motion are investigated. Methods based on exponential functionals have been
developed to study persistence probabilities in such contexts (see e.g.\ \cite{BFFN,castellguillotinwatbled,Aurzada}). In this paper, we present a new and simple approach adapted to the case  of processes with stationary increments. Our method, based on the study of the expectation of the number of elements in the range, provides results in
non-Markovian contexts, and does not require the existence of exponential moments.

The purpose of this paper is to analyse the persistence probability for stationary increment discrete time processes, i.e.\ processes such that for any nonnegative integer $k$, $(Z_{n+k}-Z_k)_{n\ge 0} \deq (Z_n)_{n\ge 0}$, where $\deq$ means equality in law (note that
this implies that $Z_0=0$).
Under rather general assumptions, we prove that 
\[
\PP\left( Z_n^* \leq a \right)\approx \mathbb E\left[Z_n^*\right]/n\,\qquad \text{as }n\to+\infty,
\]
where $\approx$ means up to a multiplicative term in $n^{o(1)}$ (the multiplicative term is bounded by a constant for the upper bound and, for the lower bound, is larger than a function that is slowly varying at infinity).
We emphasize the fact that we obtain
the exact order when the increments are bounded
and that we obtain estimates even if the increments admit no exponential moment. 
Further, we stress that $\EE [Z_n^*]$ is in many cases a well-accessible quantity. All our conditions are formulated in terms of simple quantities of the process.
Stationary increments are a feature shared by many stochastic processes that are important in theory and applications, and we shall treat here a number of examples. 

The outline of the paper is as follows. Our main general results are stated and proved in Section~\ref{sec:gene}. 
These general results are then applied to the following examples: Sums of stationary sequences and fractional Brownian motion are treated in Section~\ref{sec:resultssumsof}, random walks in random scenery in Section~\ref{sec:rwrs}, random processes in Brownian scenery in Section~\ref{RPBS}, and the Matheron-de Marsily model in Section~\ref{sec:mdm}.

\section{General results for stationary increment processes}\label{sec:gene}

When dealing with persistence probabilities
$\mathbb P\left(Z_n^*\le a\right)$, a natural question 
is the choice of the level $a$. In most of the cases, going from a
positive level to another positive level can be simply done by multiplying $Z$ by some
positive constant.
The next technical lemma allows to pass from a negative level to a positive one and conversely, up to a change
of the multiplicative constants in the bounds. This will be crucial as we often derive upper bounds for negative levels and lower bounds for positive levels, which have to be brought to matching.
\begin{lem}\label{lem:positiveassociation}
Assume that
there exists a sub-$\sigma$-algebra $\mathcal F_0$ of $\mathcal F$
such that, given $\mathcal F_0$, the increments of $Z$ are positively associated 
and that their common conditional distribution is independent of $\mathcal F_0$. Then, for any 
$a\ge 0$ and $m>0$
such that $\mathbb P(Z_1\le -m)>0$,
$$\mathbb P(Z_n^*\le -m)\le \mathbb P\left(Z_n^*\le a\right)\le
        \frac {\mathbb P(Z^*_{n+\lceil a/m\rceil+1}\le -m)} {(\mathbb P(Z_1\le -m))^{\lceil a/m\rceil +1}} .$$
\end{lem}
Note that this lemma does not allow to go 
from a result of the strong asymptotic form $\mathbb P(Z_n^*\le -1)\sim p_n$ to an analogous result for 
$\mathbb P(Z_n^*\le 1)$.
\subsection{Upper bounds for the persistence probability}
We start with our first central result, which is an upper bound for negative levels for centered processes, i.e. processes
with zero mean.
\begin{thm}\label{THMlem}
Let $(Z_n)_{n\ge 0}$ be a centered process with stationary increments.
Then
$$
\forall a>0~:\quad 
\mathbb P\left(Z_n^*\le -a\right)\le 
\frac {\mathbb E\left[Z_{n+1}^*\right]}{a n}.
$$
\end{thm}
When $Z_1$ is $\mathbb Z$-valued, 
a natural upper bound
is provided by the 
number of elements
$\mathcal R_n$ in the range of $Z$
up to time $n$, which is given by 
$\mathcal R_n:=\#\{Z_1,\ldots,Z_n\}$ (by convention, $\mathcal R_0=0)$.
In this context, a related quantity is $\mathbb P(T_0>n)$ with
$T_0:=\inf\{n\ge 1\ :\ Z_n = 0\}$ the first return time of $Z$ to $0$. 
Indeed
$$\mathbb P(Z_n^*\le -1)\le  \mathbb P(T_0>n)$$
and even $\mathbb P(Z_n^*\le -1) =  \mathbb P(T_0>n)/2$
if the distribution of $(Z_n)_n$ is symmetric and if $\mathbb P(Z_1\in\{-1,0,1\})=1$.
Under some general assumptions, we obtain 
a precise estimate for the tail distribution of $T_0$ of the form $\mathbb P(T_0>n)\sim \gamma\mathbb E[ \mathcal R_n]/n$. 
Moreover, in the very particular case of a process $(Z_n)_n$
moving to the nearest neighbours on $\mathbb Z$, $\mathcal R_n=Z_n^*+(-Z)_n^*+1$.
\begin{thm}[Integer valued processes] \label{THMZ}
Let $(Z_n)_{n\ge 0}$ be a $\mathbb Z$-valued 
%centered 
process with stationary increments.
Then
\begin{equation}\label{majoA}
\mathbb P(T_0>n)\le \frac{\mathbb E[\mathcal R_n]}n.
\end{equation}
Further, 
\begin{itemize}
\item if there exists $y>1$ such that $\liminf_{n\rightarrow +\infty}\frac{\mathbb E[\mathcal R_{\lfloor yn\rfloor}]}{\mathbb E[\mathcal R_{n}]}>1$, then $$\liminf_{n\rightarrow +\infty} \frac n{\mathbb E[\mathcal R_n]}\mathbb P(T_0>n)>0.$$
\item if the sequence $(\mathbb E[\mathcal R_n])_{n\ge 1}$
is regularly varying with exponent $\gamma\in(0,1)$,
then
\begin{equation}\label{gammaa2}
\lim_{n\rightarrow +\infty}\frac n{\mathbb E[\mathcal R_n]}\mathbb P(T_0>n) =\gamma\, ;
\end{equation}
\item if $\mathbb P(Z_1\in\{-1,0,1\})=1$ and if the processes $(Z_n)_n$ and $(-Z_n)_n$ have same distribution and if \eqref{gammaa2} holds with $\gamma\in(0,1)$, then
\begin{equation}\label{EEE3ba}
\mathbb P\left(Z^*_n\le -1\right)=\frac 12\mathbb P(T_0>n)
   \sim \frac {\gamma\, \mathbb E[\mathcal R_n]} {2n} \, .
\end{equation}
\end{itemize}
\end{thm}

%%%%%%%%%%%%%%%%%%%%%%%%%%%%%%%%555555

% Putting Theorem~\ref{THMAA} and Lemma~\ref{lem:positiveassociation} together, we obtain main upper bound in the general (i.e.\ not necessarily $\mathbb Z$-valued) case.
% \begin{thm}\label{THM0}
% Assume we are in the situation of Theorem~\ref{THMAA} and Lemma~\ref{lem:positiveassociation}. Then, for any $m>0$ such that $\mathbb P(Z_1\le -m)>0$,
% $$\limsup_{n\rightarrow +\infty} \frac n{a_n}\mathbb P(\max_{\ell=1,\ldots,n}Z_\ell\le 1)\le
%         \frac {B} {m\, (\mathbb P(Z_1\le -m))^{\lfloor 1/m\rfloor +2}}\mathbb .$$
% \end{thm}
% %
% %
%
%

\subsection{Lower bounds for the persistence probability}
The lower bound is much more complicated to 
establish in the general case and will require additional assumptions.
\begin{thm}\label{thm:lower}
Let $(Z_n)_{n\ge 0}$ be a centered process with stationary increments. Assume that there is an $\varepsilon>0$ such that 
\begin{equation} \label{eqn:notregvar}
\rho:=\liminf_{n\rightarrow +\infty}\frac{\mathbb E[Z^*_{n+\lfloor\varepsilon n\rfloor}]}{\mathbb E[Z_n^*]}
>1.
\end{equation}
\begin{itemize}
\item[(i)] If $\mathbb P(-Z_1\le b)=1$, then
$$\liminf_{n\rightarrow +\infty}\frac{\varepsilon n}{\mathbb E[Z_n^*]}\mathbb P(Z_n^*<0)\ge \frac{\rho-1}b .$$
\end{itemize}
Assume from now on that $(b_n)_n$ is a sequence of positive numbers such that
\begin{equation}\label{eqn:hypbn}
K:=\limsup_{n\rightarrow +\infty} n\mathbb P(-Z_1>b_n)<+\infty\, ,
\end{equation}
and that at least one of the two following assumptions holds true:
\begin{itemize}
\item[(ii)] There exists $p>1$ such that
\begin{equation}\label{Hypkappa}
\kappa:=\limsup_{n\rightarrow +\infty} (\mathbb E[Z_n^*])^{-1}\Vert Z^*_{n+\lfloor \varepsilon n\rfloor}-Z_{n+\lfloor\varepsilon n\rfloor}\Vert_p <+\infty\, .
\end{equation}
\item[(iii)] The assumptions of Lemma~\ref{lem:positiveassociation} hold and
\begin{equation}\label{AAA}
\rho>\limsup_{n\rightarrow +\infty}\frac{\mathbb E\left[\max\left(0,Z^*_{\lfloor\varepsilon n\rfloor+1,n+\lfloor \varepsilon n\rfloor}\right)\right]}{\EE[ Z_n^*]}.
\end{equation}
This last condition holds true if 
$\rho>1+\limsup_{n\rightarrow +\infty}\frac{\mathbb E\left[\max(0,Z_{\lfloor \varepsilon n\rfloor+1})\right]}{\mathbb E[Z_n^*]}$.
\end{itemize}
Then there is an integer $d$ such that
$$
\liminf_{n\rightarrow +\infty}
      \frac {nb_{dn}}{\mathbb E[Z_n^*]}  \mathbb P\Big(Z_n^*< 0\Big)>0\, .$$
\end{thm}
The additional conditions in Theorem~\ref{thm:lower} are merely technical regularity conditions. 
We stress that they are easily checkable in terms of the following three accessible quantities: 
the asymptotic behaviour of $\EE[ |Z_n^*-Z_n|^p]$, of $\EE[Z_n^*]$ and the tail of $Z_1$. In particular, (\ref{eqn:notregvar}) is satisfied (for any $\varepsilon>0$) if the sequence $(\EE[Z_n^*])_n$ is regularly varying with positive exponent and in that case (\ref{Hypkappa}) is true as soon as $\limsup_{n\to+\infty}(\mathbb E[Z_n^*])^{-1}\Vert Z^*_{n}-Z_n\Vert_p <+\infty$ (since $\mathbb E[Z_{n+\lfloor \varepsilon n\rfloor}^*]/\mathbb E[Z_n^{*}]\sim (1+\varepsilon)^\alpha)$, for some $\alpha>0$)
and so as soon as $\limsup_{n\rightarrow +\infty}(\mathbb E[Z_n^*])^{-1}\Vert \max_{k=1,...,n}|Z_k|\Vert_p<\infty$.

\newpage
Let us first prove a sufficient criterion for condition \eqref{AAA}.
\begin{lem}\label{lem:ConditionAAA}
Assume that $((n^{-\alpha}Z_{\lfloor nt\rfloor})_t)_n$ converges in distribution to some 
$\alpha$-self similar process $(\Delta_t)_t$ with stationary increments and such that
$\mathbb E\left[Z^*_{n}\right]\sim n^\alpha\mathbb E\left[\sup_{[0,1]}\Delta\right]$ and
$\mathbb E\left[\max\left(0,Z^*_{\lfloor a n\rfloor+1,\lfloor b n\rfloor}\right)\right]\sim n^\alpha\EE\left[\max(0,\sup_{[a,b]}\Delta)\right]$ for every $a<b$ and that
$\mathbb P \left(\sup_{[0,1]} \Delta=0\right)<\mathbb P\left(\Delta_1<0\right)$.
Then there exists an $\varepsilon>0$ such that \eqref{AAA} holds true.\\
\end{lem}
Regarding the last assumption, we remark that one may indeed have $\mathbb P \left(\sup_{[0,1]} \Delta=0\right)>0$ for a self-similar process with stationary increments. Take, for example, a stable subordinator $(A_t)_{t\geq 0}$ (e.g.\ $A_t:=\inf \{ s>0 | B_s>t \}$ for a Brownian motion $(B_t)_t$) and an independent random variable $\eta\in\{\pm 1\}$. Then $\Delta_t := \eta A_t$ is self-similar, has stationary increments, and satisfies  $\mathbb P(\sup_{[0,1]} \Delta=0) = \mathbb P(\eta=-1)=\mathbb P\left(\Delta_1<0\right)$.

We discuss now assumption \eqref{eqn:hypbn}.
\begin{rqe}\label{MAINTHM}
If $\mathbb E[G(\max(-Z_1,0))]<\infty$ for some positive strictly increasing function $G$, then the Markov inequality gives
$$
\forall b>0~:\quad\mathbb P(-Z_1>b)=\mathbb P(\max(-Z_1,0)>b)\le \frac{\mathbb E[G(\max(-Z_1,0))]}{G(b)}.$$
Hence, if $\mathbb E[G(\max(-Z_1,0))]<\infty$  with e.g.\ $G(t)=e^{at}$, $G(t)=t^{1+a}$, or $G(t)=e^{at^2}$ for some $a>0$ then \eqref{eqn:hypbn} holds with respectively $b_n = c_0\log (n)$, 
$b_n=c_0 n^{\frac 1{1+a}}$, or $b_n = c_0\sqrt{\log (n)}$, for some suitable $c_0$.
\end{rqe}
In particular, if $Z_1$ admits moments of every order, then \eqref{eqn:hypbn} holds with $b_n=n^{o(1)}$.
Indeed, let us consider $\tilde b_n$ as the smallest $b\ge 0$ such that
$\mathbb P(-Z_1>b)\le 1/n$, then, for every $\gamma>1$,
%\begin{eqnarray*}
$$
\mathbb P \left(-Z_1\ge  n^{\frac 1\gamma}\Vert \max(-Z_1,0)\Vert_\gamma\right)= \mathbb P \left((\max(-Z_1,0))^\gamma\ge  n\Vert \max(-Z_1,0)\Vert^\gamma_\gamma\right)
\le \frac 1n\, ,
$$
%\end{eqnarray*}
due to the Markov inequality. This shows that $\tilde b_n\le n^{\frac 1\gamma}\Vert \max(-Z_1,0)\Vert_\gamma$, for every 
 $\gamma>1$. This implies that $\limsup_n\log \tilde b_n/\log n\le 0$, i.e. $\tilde b_n\le n^{o(1)}$.
%Combining this with Lemma~\ref{lem:positiveassociation}
%and Theorems \ref{THMlem} and \ref{thm:lower}, we obtain the following result.
%\begin{coro}
%Let $(Z_n)_{n\ge 0}$ be a centered process with stationary increments, admitting moments of every order and satisfying the assumptions of Lemma~\ref{lem:positiveassociation}
%and such that (\ref{eqn:notregvar}) holds. Then,
%for every $a\in \mathbb R$ such that $\mathbb P(Z_1\le a)>0$,
%we have
%$$
%\mathbb P(Z_n^*\le a)\approx \mathbb E\left[Z_n^*\right]/n. $$
%\end{coro}
%For random variables bounded from below, we obtain a more precise result.
\begin{coro}\label{thmbounded}
Let $(Z_n)_{n\ge 0}$ be a centered process with stationary increments such that $\mathbb P(Z_1\ge -b)=1$ for some $b>0$. 
%Then
%\begin{equation}\label{majobounded}
%\forall\varepsilon>0,\quad \varepsilon n\,\mathbb P(Z_n^*<0)\ge b^{-1}\mathbb E[Z^*_{n+\lfloor\varepsilon n\rfloor}-Z_n^*].
%\end{equation}
%In particular,
%\begin{itemize}
%\item if \eqref{eqn:notregvar} holds true, then
%\[
%\liminf_{n\rightarrow +\infty}\frac{n\mathbb P(Z_n^*<0)}{\mathbb E[Z_n^*]}\ge\frac{\rho-1}{\varepsilon \, b}\, ;
%\]
%\item 
Assume moreover that
$(\mathbb E[Z_n^*])_n$ is $\gamma$-regularly
varying with $\gamma\in(0,1)$. Then
\[
\liminf_{n\rightarrow +\infty}\frac{n\mathbb P(Z_n^*<0)}{\mathbb E[Z_n^*]}\ge\frac{\gamma}{b}\, .
\]
%\end{itemize}
\end{coro}

\subsection{Proofs of the general results} \label{sec:proofs}
\begin{proof}[Proof of Lemma~\ref{lem:positiveassociation}]
Let $K\ge 0$ be such that $Km\ge a$, so that $-(K+1)m+a\le -m$. Let $n$ be a positive integer.
Given $\mathcal F_0$, the increments of $(Z_n)_ {n\ge 0}$ are positively associated, hence
\begin{eqnarray*}
&& \mathbb P\left(\left.Z_{n+K+1}^*\le-m\right|\mathcal F_0\right)\\
&\ge&
\mathbb P\left(\left.\forall k=1,\ldots,K+1,\ (Z_k-Z_{k-1})\le -m,\ \max_{\ell=K+2,\ldots,n+K+1} (Z_\ell-Z_{K+1})\le a\right|\mathcal F_0\right)\\
&\ge& \left(\prod_{k=1}^{K+1}\mathbb P\left(\left.Z_{k}-Z_{k-1}\le -m\right|\mathcal F_0\right)\right)\,
      \mathbb P\left(\left.\max_{\ell=K+2,\ldots,n+K+1} (Z_\ell-Z_{K+1})\le a\right|\mathcal F_0\right)\\
&\ge&  \left(\mathbb P(Z_1\le -m)\right)^{K+1}\,      \mathbb P\left(\left.\max_{\ell=K+2,\ldots,n+K+1} (Z_\ell-Z_{K+1})\le a\right|\mathcal F_0\right),
\end{eqnarray*}
since the conditional distribution of $Z_k-Z_{k-1}$ given $\mathcal F_0$ is the
distribution of $Z_1$. So
\[
\mathbb P\left(Z^*_{n+K+1} \le-m\right)
\ge\left(\mathbb P(Z_1\le -m)\right)^{K+1}\,
\mathbb P\left( Z_n^*\le a\right),
\]
since the increments of $Z$ are stationary.
\end{proof}
\begin{proof}[Proof of Theorem~\ref{THMlem}]
Up to dividing $Z$ by $a$, we assume that $a=1$. We set 
$Z_{n,m}^*:=\max_{k=n,\ldots,m}Z_k$. Since the increments of $Z$ are stationary, we deduce that $Z_{k+1,k+n}^*-Z_k$
has the same distribution as $Z_{n}^*$ and so
\begin{eqnarray*}
n\,\mathbb P(Z_{n}^*\le -1)&\le&
     \sum_{k=1}^{n}\mathbb P(Z_{n-k+1}^*\le -1)\\
  &=& \sum_{k=1}^{n}\mathbb P(1+Z_{k+1,n+1}^*\le Z_{k})\\
 &\le&\mathbb E[\#\mathcal E_n]\, ,
\end{eqnarray*}
with $\mathcal E_n:=\{k=1,\ldots,n\, :\, 1+Z_{k+1,n+1}^*\le Z_k\}$.
Note that
$$\# \mathcal E_n\le
 \sum_{k\in\mathcal E_n}(Z_k-Z_{k+1,n+1}^*)
    \le \sum_{k=1}^n(Z_{k,n+1}^*-Z_{k+1,n+1}^*)\le Z_{n+1}^*-Z_{n+1}.$$Hence
$$n\,\mathbb P(Z_{n}^*\le -1) \le \mathbb E[Z_{n+1}^*],$$
since
$\mathbb E[Z_{n+1}]=0.$
\end{proof}
Similar to the fact stated in Theorem~\ref{THMlem}, one can obtain the following comparable formula.
\begin{prop} Let $(Z_n)_{n\ge 0}$ be a centered process with stationary increments. Then 
$$
\forall a>0~:\quad
\mathbb P\left(\min_{\ell=1,\ldots,n}|Z_\ell|\ge a\right)\le 
\frac{\mathbb E\left [Z_{n+1}^*+(-Z)_{n+1}^*\right]+a}{an}.
$$
\end{prop}
\begin{proof} By multiplying $Z$ by a constant, one can assume w.l.o.g.\ that $a=1$. Since $Z$ has stationary increments, 
\begin{eqnarray*}
n \mathbb P\left(\min_{\ell=1,\ldots,n} | Z_\ell|\ge 1\right)&\le&
\sum_{k=1}^n\mathbb P\left(\min_{\ell=1,\ldots,k} | Z_\ell|\ge 1\right)\\
&=&\sum_{k=1}^n\mathbb P\left(\min_{\ell=1,\ldots,k}  |Z_{n-k+1+\ell}-Z_{n-k+1}|\ge 1\right)\\
&\le&\sum_{k=1}^n\mathbb P\left(\forall \ell=1,\ldots,k ~: \lfloor Z_{n-k+1+\ell}\rfloor\ne \lfloor Z_{n-k+1}\rfloor\right)\, .
\end{eqnarray*}
Now, observing that the event
$\{\forall \ell=1,...,k\, :\, \lfloor Z_{n-k+1+\ell}\rfloor\ne \lfloor Z_{n-k+1}\rfloor\} $ means that $n-k+1$ is the last visit time of $\lfloor Z\rfloor$
to $\lfloor Z_{n-k+1}\rfloor$ up to time $n+1$, we conclude that
\begin{eqnarray*}
n \mathbb P\left(\min_{\ell=1,\ldots,n} | Z_\ell|\ge 1\right)
&\leq & 
\mathbb E\left[ \#\{ \lfloor Z_1\rfloor,\dots,  \lfloor Z_{n+1}\rfloor\}-1 \right]\\
&\le&\mathbb E\left[\max_{k=1,\ldots,n+1}\lfloor Z_{k}\rfloor-\min_{k=1,\ldots,n+1}\lfloor 
Z_{k}\rfloor\right]\\
&\le&\mathbb E\left[\max_{k=1,\ldots,n+1} Z_{k}-\min_{k=1,\ldots,n+1} 
Z_{k}+1\right].
\end{eqnarray*}
\end{proof}

\begin{proof}[Proof of Theorem \ref{THMZ}]
Let us prove \eqref{majoA}. Note that since $(Z_\ell)_\ell$ is $\mathbb Z$-valued and has stationary increments, 
considering the last visit times of $Z$ up to time $n$, we have
\begin{eqnarray}
 \mathbb E [\mathcal R_n] &=& \mathbb E\left[ \#\{ Z_1, \ldots, Z_n\}\right]
\notag \\
&=& \sum_{k=0}^{n-1} \mathbb P( \forall \ell =1,\ldots, k : Z_{n-k+\ell}\neq Z_{n-k})
\notag \\
&=& \sum_{k=0}^{n-1} \mathbb P( \forall \ell =1,\ldots, k : Z_{\ell}\neq 0)
\notag \\
&=& \sum_{k=0}^{n-1} \mathbb P( T_0 > k)\, . \label{eqn:alforg}
\end{eqnarray}
Since $(\mathbb P(T_0>k))_k$ is non increasing,  for every $0\leq x<1<y$, we have by (\ref{eqn:alforg}), 
\begin{equation}\label{Eqxy}
\frac{\mathbb E[\mathcal R_{\lfloor y n\rfloor}-\mathcal R_{n}]}{\lfloor yn\rfloor- n}  \le  \mathbb P(T_0>n)
\le \frac{\mathbb E[\mathcal R_{ n}-\mathcal R_{\lfloor x n\rfloor}]}{n-\lfloor xn\rfloor}\, ;
\end{equation}
from which we get \eqref{majoA} (by taking $x=0$).
%and the lower bound of \eqref{gammaa1} (by taking
%with $y=1+\varepsilon$ and $x=0$).\\
If $\liminf_{n\rightarrow +\infty}\frac{\mathbb E[\mathcal R_{\lfloor yn\rfloor}]}{\mathbb E[\mathcal R_{n}]}>1$,
then
$$\liminf_{n\rightarrow +\infty}\frac n{\mathbb E[\mathcal R_n]}\mathbb P(T_0>n)\ge \left(\liminf_{n\rightarrow +\infty}\frac{\mathbb E[\mathcal R_{\lfloor yn\rfloor}]}
{\mathbb E[\mathcal R_{n}]}-1\right)\frac 1{y-1}>0   .$$
Assume now that $(\mathbb E[\mathcal R_n])_n$ is $\gamma$-regularly varying then \eqref{Eqxy} leads to
$$ \frac {y^\gamma -1}{y-1}\le\liminf_{n\rightarrow +\infty}\frac n{\mathbb E[\mathcal R_n]}\mathbb P(T_0>n)\le
    \limsup_{n\rightarrow +\infty}\frac n{\mathbb E[\mathcal R_n]}
    \mathbb P(T_0>n)\le
       \frac {1-x^\gamma }{1-x}$$
from which (\ref{gammaa2}) is deduced by letting the variables $x$ and $y$ converge to one.

If $\mathbb P(Z_1\in\{-1,0,1\})=1$ and if the distribution of $Z=(Z_n)_n$ is symmetric, then,
\begin{eqnarray*}
\mathbb P\left(Z_n^*\le -1\right)
&=&\mathbb P(T_0>n,\ Z_1<0)\\
&=&\mathbb P(T_0>n,\ Z_1>0)\\
&=&\frac 12 \mathbb P(T_0>n).
\end{eqnarray*}
and so
relation (\ref{EEE3ba}) directly follows from (\ref{gammaa2}).

%By stationarity
%\begin{eqnarray*}
%\sum_{k=1}^n\mathbb P(T_0>k)&=&\sum_{k=1}^n
%   \mathbb P(\forall \ell=1,\ldots,k : Z_\ell\ne 0)\\
%&=&\sum_{k=1}^n    \mathbb P(\forall \ell=1,\ldots,k : Z_{n-k+\ell}\ne Z_{n-k})\\
%&=& \mathbb E\left [\#\{Z_0,\ldots,Z_n\}\right]\\
%&=&\mathbb E\left[\sup_{k=1,\ldots,n} Z_{k}-\inf_{k=1,\ldots,n}Z_k +1\right]\sim A\, a_n\, ,
%\end{eqnarray*}
%where the last equality comes from the fact that $\mathbb P(Z_1\in\{-1,0,1\})=1$.
%We conclude due to the Tauberian theorem (see \cite[XIII.5]{Fe} or the proof of Proposition \ref{eqT0}).
\end{proof}

One can also obtain results as in Theorem~\ref{THMZ} even if $(\EE[Z_n^*])_n$ is not necessarily regularly varying:
\begin{prop}
Let $(Z_n)_{n\ge 0}$ be a
$\mathbb Z$-valued centered process with stationary increments. If there exists an $\varepsilon>0$ such that 
$$
\rho:=\liminf_{n\rightarrow +\infty}\frac{\mathbb E[\mathcal R_{n+\lfloor \varepsilon n\rfloor}]}{\mathbb E[\mathcal R_n]}>1
$$ 
then
$$ %\begin{equation}\label{gammaa1}
\frac{\rho-1}\varepsilon\leq\liminf_{n\rightarrow +\infty}\frac n{\mathbb E[\mathcal R_n]}\mathbb P(T_0>n)\leq \limsup_{n\rightarrow +\infty}\frac n{\mathbb E[\mathcal R_n]}\mathbb P(T_0>n)\le 1.
$$ %\end{equation}
\end{prop}

\begin{proof}
It suffices to put $y=1+\varepsilon$ and $x=0$ in (\ref{Eqxy}). 
\end{proof}

% \begin{proof}[Proof of Theorem \ref{THM0}]
% Due to Theorem~\ref{THMAA} applied to $Z/m$,
% \begin{eqnarray*}
% \limsup_{n\rightarrow +\infty} \frac n{a_n}\mathbb P\left(\max_{\ell=1,\ldots,n}Z_\ell\le -m\right)
% &=&\limsup_{n\rightarrow +\infty} \frac n{a_n}\mathbb P\left(\max_{\ell=1,\ldots,n}\frac {Z_\ell}m\le -1\right)\\
% &\le& \frac B m.
% \end{eqnarray*}
% We conclude due to Lemma~\ref{lem:positiveassociation}.
% \end{proof}

\begin{proof}[Proof of Theorem~\ref{thm:lower}]
{\it Step 1: Basic estimate.}  Fix $\varepsilon>0$ as in the assumption. Since the increments of $Z$ are stationary and $Z_0=0$, we can deduce that 
\begin{equation}\label{changetime}\mathbb P(Z_n^*<0)
=\mathbb P(Z_{k+1,n+k}^*<Z_k)\, .
\end{equation}
Let $M_n:=\#\{k=1,\ldots,\lfloor \varepsilon n\rfloor  \ :\   
         Z_{k+1,n+\lfloor \varepsilon n\rfloor}^*<Z_k\}$.
Observe that $M_n$ represents the number of records of the reversed process 
$(Z_{n+\lfloor\varepsilon n\rfloor-k}-Z_{n+\lfloor \varepsilon n\rfloor})_{k=0,...,n+\lfloor \varepsilon n\rfloor}$ 
occuring during the time interval
$\{n,...,n+\lfloor\varepsilon n\rfloor-1\}$.
Note that
\begin{eqnarray}
\lfloor \varepsilon n\rfloor\, \mathbb P(Z_{n}^*<0)
&\ge&
\sum_{k=1}^{\lfloor \varepsilon n\rfloor}\mathbb P\left( Z_{\lfloor (1+\varepsilon)n\rfloor-k}^*<0\right)\nonumber
\\
&=& \sum_{k=1}^{\lfloor \varepsilon n\rfloor}\mathbb P\left(Z_{k+1,\lfloor (1+\varepsilon)n\rfloor}^*<Z_k\right)\nonumber\\
&=&\mathbb E\left[ M_n\right]\, ,\label{EQUA1}
\end{eqnarray}
where we used \eqref{changetime}.
We set $R_{M_n+1}:=\inf\{k=\lfloor \varepsilon n\rfloor+1,\ldots,n+\lfloor \varepsilon n\rfloor
    \ :\ Z_k>Z^*_{k+1,n+\lfloor \varepsilon n\rfloor}\}$
with the convention $Z^*_{n+\lfloor \varepsilon n\rfloor+1,n+\lfloor \varepsilon n\rfloor}=-\infty$
(so that $R_{M_n+1}$ is well-defined and smaller than or equal to $n+\lfloor \varepsilon n\rfloor$)
 and
\begin{eqnarray*}
\forall i=1,\ldots,M_n :\quad R_i&:=&\sup\{k=1,\ldots,R_{i+1}-1\ :\
Z_k>Z_{k+1,n+\lfloor \varepsilon n\rfloor}^*
\}\\
&=&\sup\{k=1,...,R_{i+1}-1\, :\, Z_k>Z_{R_{i+1}}\}\, .
\end{eqnarray*}
Note that $R_i$ is the $i$-th smallest integer $k$ such that $Z_k$
is larger than all the following values $Z_{k+1},...,Z_{n+\lfloor \varepsilon n\rfloor}$.
Note that $Z_{R_1}=Z_{n+\lfloor \varepsilon n\rfloor}^*$ and $Z_{R_{M_n+1}}
=Z^*_{\lfloor \varepsilon n\rfloor+1,n+\lfloor \varepsilon n\rfloor}$.
Hence
\begin{eqnarray}
Z_{n+\lfloor \varepsilon n\rfloor}^*
-Z^*_{\lfloor \varepsilon n\rfloor+1,n+\lfloor \varepsilon n\rfloor} 
&=&  Z_{R_1}-Z_{R_{M_n+1}}
   =\sum_{i=1}^{M_n} (Z_{R_i}-Z_{R_{i+1}})\nonumber\\
&\le&  \sum_{i=1}^{M_n} (Z_{R_i}-Z_{R_{i}+1})\nonumber\\
&\le&
\left(\sup_{k=1,\ldots,\lfloor \varepsilon n\rfloor}   (Z_{k}-Z_{k+1})\right)M_n\, ,\label{MinoMn}
\end{eqnarray}
where we used the fact that $Z_{R_{i+1}}\ge Z_{R_i+1}$, which comes from the definition of $R_i$.
Observe that \eqref{MinoMn} becomes $0\le 0$ if $M_n=0$.
Combining
%$$\mathbb E[Z_{n+\lfloor \varepsilon n\rfloor}^*-Z^*_{n}] 
%\le\mathbb E\left[\left(\sup_{k=1,\ldots,\lfloor \varepsilon n\rfloor}   (Z_{k}-Z_{k+1})\right)M_n\right]\, ,$$
%and so, 
\eqref{EQUA1} and \eqref{MinoMn}, we obtain that, for every $b>0$,
\begin{eqnarray}
\lfloor \varepsilon n\rfloor\mathbb P(Z_n^*<0)&\ge&
\mathbb E[M_n]\nonumber\\
&\ge& \mathbb E\left[M_n
       \mathbf   1_{\{\sup_{k=1,\ldots,\lfloor
         \varepsilon n\rfloor}   (Z_{k}-Z_{k+1})\le b\}}\right]\nonumber\\
&\ge&  b^{-1}\, \mathbb E\left[(Z_{n+\lfloor \varepsilon n\rfloor}^*  -Z^*_{\lfloor\varepsilon n\rfloor+1,n+\lfloor \varepsilon n\rfloor})\mathbf   1_{\{\sup_{k=1,\ldots,\lfloor
         \varepsilon n\rfloor}   (Z_{k}-Z_{k+1})\le b\}}\right]\, .\label{keyinequality}
\end{eqnarray}
Moreover
\begin{equation}\label{Esup}
\mathbb E[Z^*_{\lfloor\varepsilon n\rfloor+1,n+\lfloor \varepsilon n\rfloor}]=
\mathbb E[Z_{\lfloor\varepsilon n\rfloor}]+\mathbb E[Z^*_{\lfloor\varepsilon n\rfloor+1,n+\lfloor \varepsilon n\rfloor}-Z_{\lfloor\varepsilon n\rfloor}]=0+
\mathbb E[Z_n^*]=\mathbb E[Z_n^*]\, .
\end{equation}
{\it Step 2: Proof of (i).} 
If $\mathbb P(-Z_1\le b)=1$, due to \eqref{keyinequality} and to \eqref{Esup},
we obtain
\[
\lfloor \varepsilon n\rfloor\mathbb P(Z_n^*<0)\ge \mathbb E[M_n]
\ge  b^{-1}\mathbb E\left[Z_{n+\lfloor \varepsilon n\rfloor}^*-Z^*_{n}\right]
\]
and so
$$\liminf_{n\rightarrow +\infty}\frac{\varepsilon n}{\mathbb E[Z_n^*]}\mathbb P(Z_n^*<0)\ge \frac{\rho-1}b .$$

{\it Step 3: Proof of (ii).
} Due to \eqref{keyinequality}, For any $b>0$,
\begin{eqnarray}
\lfloor \varepsilon n\rfloor\mathbb P(Z_n^*<0)
&\ge&
 b^{-1}\, \mathbb E\left[((Z_{n+\lfloor \varepsilon n\rfloor}^*     -Z_{n+\lfloor \varepsilon n\rfloor})-(Z^*_{\lfloor\varepsilon n\rfloor+1,n+\lfloor \varepsilon n\rfloor}- Z_{n+\lfloor \varepsilon n\rfloor}))\mathbf   1_{\{\sup_{k=1,\ldots,\lfloor
         \varepsilon n\rfloor}   (Z_{k}-Z_{k+1})\le b\}}\right]\nonumber\, .
\end{eqnarray}
Since $Z^*_{\lfloor\varepsilon n\rfloor+1,n+\lfloor \varepsilon n\rfloor}- Z_{n+\lfloor \varepsilon n\rfloor}\ge 0$, we have
\begin{eqnarray*}
\mathbb E\left[\left(Z^*_{\lfloor\varepsilon n\rfloor+1,n+\lfloor \varepsilon n\rfloor}-Z_{n+\lfloor \varepsilon n\rfloor}\right)
\mathbf   1_{\{\sup_{k=1,\ldots,\lfloor
         \varepsilon n\rfloor}   (Z_{k}-Z_{k+1})\le b\}}\right]
&\le&\mathbb E[Z^*_{\lfloor\varepsilon n\rfloor+1,n+\lfloor \varepsilon n\rfloor}-Z_{n+\lfloor \varepsilon n\rfloor}]\\
&=&\mathbb E[Z^*_{\lfloor\varepsilon n\rfloor+1,n+\lfloor \varepsilon n\rfloor}-Z_{\lfloor\varepsilon n\rfloor}]=\mathbb E[Z^*_{n}]\, ,
\end{eqnarray*}
where we used the facts that $\mathbb E[Z_{n+\lfloor \varepsilon n\rfloor}]=\mathbb E[Z_{\lfloor\varepsilon n\rfloor}]=0$ and that $Z^*_{\lfloor\varepsilon n\rfloor+1,n+\lfloor \varepsilon n\rfloor}-Z_{\lfloor\varepsilon n\rfloor}\stackrel{\mathcal L}=Z_n^*$.
Therefore, using again the fact that $\mathbb E[Z_{n+\lfloor \varepsilon n\rfloor}]=0$, we obtain
\begin{eqnarray*}
&\ &\lfloor \varepsilon n\rfloor\mathbb P(Z_n^*<0)\\
&\ge&
  b^{-1}\left( \mathbb E\left[(Z_{n+\lfloor \varepsilon n\rfloor}^*-Z_{n+\lfloor \varepsilon n\rfloor})
\mathbf   1_{\{\sup_{k=1,\ldots,\lfloor
         \varepsilon n\rfloor}   (Z_{k}-Z_{k+1})\le b\}}\right] -
   \mathbb E[Z^*_{n}]\right)\, \\
&\ge&  b^{-1}\left( \mathbb E\left[Z_{n+\lfloor \varepsilon n\rfloor}^*-Z^*_{n}\right]-
\mathbb E\left[(Z_{n+\lfloor \varepsilon n\rfloor}^*-Z_{n+\lfloor \varepsilon n\rfloor}) 
\mathbf   1_{\{\sup_{k=1,\ldots,\lfloor
         \varepsilon n\rfloor}   (Z_{k}-Z_{k+1})> b\}}\right]\right)
\nonumber\\
 &\ge&   b^{-1}\left( \mathbb E\left[Z_{n+\lfloor \varepsilon n\rfloor}^*-Z^*_{n}\right]-\Vert Z_{n+\lfloor \varepsilon n\rfloor}^*-Z_{n+\lfloor \varepsilon n\rfloor}\Vert_p\left(
       \mathbb P\left(\sup_{k=1,\ldots,\lfloor
         \varepsilon n\rfloor}   (Z_{k}-Z_{k+1})> b\right)\right)^{\frac 1q}\right)\label{estimHolder}\\
&\ge&b^{-1}\left( \mathbb E\left[Z_{n+\lfloor \varepsilon n\rfloor}^*-Z^*_{n}\right]-\Vert Z_{n+\lfloor \varepsilon n\rfloor}^*-Z_{n+\lfloor \varepsilon n\rfloor}\Vert_p\left(
       \varepsilon n\mathbb P(-Z_1> b)\right)^{\frac 1q}\right)\, ,\nonumber
\end{eqnarray*}
with $q$ such that $\frac 1p+\frac 1q=1$.
Let $d>0$ and $K:=\limsup_{n\to+\infty} n \mathbb P(-Z_1>b_{n})$.
We obtain
$$
\liminf_{n\rightarrow +\infty} \frac{ \varepsilon n\, b_{ dn}}{\mathbb E[Z_n^*]}\mathbb P(Z_{n}^*<0)    \ge \rho - \kappa  (\varepsilon K/d)^{1/q} - 1,
$$
which can be made positive by an appropriate choice of $d$, since $\rho>1$.\\

{\it Step 4: Proof of (iii).} Due to \eqref{keyinequality},
$$
\lfloor \varepsilon n\rfloor\mathbb P(Z_n^*<0)\ge  b_{dn}^{-1}\, \mathbb E\left[(\max(0,Z_{n+\lfloor \varepsilon n\rfloor}^*)  -|Z_1|-Z^*_{\lfloor\varepsilon n\rfloor+1,n+\lfloor \varepsilon n\rfloor})\mathbf   1_{\{\sup_{k=1,\ldots,\lfloor
         \varepsilon n\rfloor}   (Z_{k}-Z_{k+1})\le b_{dn}\}}\right]\, .
$$
Moreover
$$\mathbb E\left[(|Z_1|+Z^*_{\lfloor\varepsilon n\rfloor+1,n+\lfloor \varepsilon n\rfloor})\mathbf   1_{\{\sup_{k=1,\ldots,\lfloor
         \varepsilon n\rfloor}   (Z_{k}-Z_{k+1})\le b\}}\right]\\
\le\mathbb E\left[\max\left(0,Z^*_{\lfloor\varepsilon n\rfloor+1,n+\lfloor \varepsilon n\rfloor}\right)+|Z_1|)\right]\, .$$
Therefore
\begin{eqnarray*}
&\ &\lfloor \varepsilon n\rfloor\mathbb P(Z_n^*<0)\\
&\ge& b_{ dn}^{-1}\left( \mathbb E\left[\max(0,Z_{n+\lfloor \varepsilon n\rfloor}^*)\mathbf 1_{\{\sup_{k=1,\ldots,\lfloor \varepsilon n\rfloor}  
                         (Z_{k}-Z_{k+1})\le b_{ dn}\}}\right]-
   \mathbb E\left[\max\left(0,Z^*_{\lfloor\varepsilon n\rfloor+1,n+\lfloor \varepsilon n\rfloor}\right)+|Z_1|\right]\right)\\
&\ge& b_{ dn}^{-1}\left( \mathbb E\left[\mathbb  E[\max(0,Z_{n+\lfloor \varepsilon n\rfloor}^*)|\mathcal F_0] \mathbb P\left(\left.\sup_{k=1,\ldots,\lfloor \varepsilon n\rfloor}  
                         (Z_{k}-Z_{k+1})\le b_{dn}\right|\mathcal F_0\right)\right]\right.\\
&\ &\ \ \ \ \ \ \ \  \left.-
   \mathbb E\left[\max\left(0,Z^*_{\lfloor\varepsilon n\rfloor+1,n+\lfloor \varepsilon n\rfloor}\right)+|Z_1|\right]\right)\nonumber\\
&\ge&  b_{ dn}^{-1}\left(\mathbb E\left[\mathbb  E[\max(0,Z_{n+\lfloor \varepsilon n\rfloor}^*)|\mathcal F_0] (1-\varepsilon n\mathbb P(-Z_1 > b_{ dn}))\right] -
\mathbb E\left[\max\left(0,Z^*_{\lfloor\varepsilon n\rfloor+1,n+\lfloor \varepsilon n\rfloor}\right)+|Z_1|\right]\right)\nonumber\\
&=&  b_{ dn}^{-1}\left( \mathbb E\left[\max(0,Z_{n+\lfloor \varepsilon n\rfloor}^*)\right]
           (1-\varepsilon n\mathbb P(-Z_1> b_{dn})) -  \mathbb E\left[\max\left(0,Z^*_{\lfloor\varepsilon n\rfloor+1,n+\lfloor \varepsilon n\rfloor}\right)+|Z_1|\right]\right)\\
&\ge&  b_{ dn}^{-1}\left( \mathbb E\left[Z^*_{n+\lfloor \varepsilon n\rfloor}\right]
           (1-\varepsilon n\mathbb P(-Z_1> b_{dn})) -  \mathbb E\left[\max\left(0,Z^*_{\lfloor\varepsilon n\rfloor+1,n+\lfloor \varepsilon n\rfloor}\right)+|Z_1|\right]\right)\, ,\label{EQUA2}
\end{eqnarray*}
where we used the fact that the increments of $Z$ are positively associated
conditionally to $\mathcal F_0$, and that their common conditional distribution is 
independent of $\mathcal F_0$.
This leads us to
\begin{eqnarray*}
\liminf_{n\to+\infty} \frac{\varepsilon n b_{\lfloor dn\rfloor}}{\EE[ Z_n^*]} \mathbb P( Z_n^* < 0)&\geq&
\rho(1-\varepsilon \frac K d) -\limsup_{n\rightarrow +\infty}\frac{\mathbb E\left[\max\left(0,Z^*_{\lfloor\varepsilon n\rfloor+1,n+\lfloor \varepsilon n\rfloor}\right)\right]}{\EE[ Z_n^*]}\, ,
\end{eqnarray*}
since \eqref{eqn:notregvar} implies that $\lim_{n\rightarrow +\infty}\mathbb E[Z_n^*]=+\infty$. We conclude by adjusting $d$.\\
Finally, let us prove the sufficient condition. To this end we observe that
\begin{eqnarray*}
\mathbb E\left[\max\left(0,Z^*_{\lfloor\varepsilon n\rfloor+1,n+\lfloor \varepsilon n\rfloor}\right)\right]&=&\EE\left[Z^*_{\lfloor\varepsilon n\rfloor+1,n+\lfloor \varepsilon n\rfloor}\right]-\mathbb E\left[\min\left(0,Z^*_{\lfloor\varepsilon n\rfloor+1,n+\lfloor \varepsilon n\rfloor}\right)\right]\\
&\le&\EE\left[Z^*_{\lfloor\varepsilon n\rfloor+1,n+\lfloor \varepsilon n\rfloor}-Z_{\lfloor\varepsilon n\rfloor}\right]-\mathbb E\left[\min\left(0,Z_{\lfloor\varepsilon n\rfloor+1}\right)\right]\\
&\le&\EE\left[Z^*_{n}\right]-\mathbb E\left[\min\left(0,Z_{\lfloor\varepsilon n\rfloor+1}\right)\right]\\
&\le&\EE\left[Z^*_{n}\right]+\mathbb E\left[\max\left(0,Z_{\lfloor\varepsilon n\rfloor+1}\right)\right]\, ,
\end{eqnarray*} 
since $\EE[Z_{\lfloor\varepsilon n\rfloor}]=0$ and since $Z^*_{\lfloor\varepsilon n\rfloor+1,n+\lfloor \varepsilon n\rfloor}-Z_{\lfloor\varepsilon n\rfloor}$ has the same distribution as $Z_n^*$, and using the fact that $Z^*_{\lfloor\varepsilon n\rfloor+1,n+\lfloor \varepsilon n\rfloor}\ge Z_{\lfloor\varepsilon n\rfloor+1}$.
\end{proof}

\begin{proof}[Proof of Lemma \ref{lem:ConditionAAA}]
Observe that
$$\lim_{n\rightarrow +\infty}\frac{\mathbb E\left[Z^*_{n+\lfloor \varepsilon n\rfloor}-\max\left(0,Z^*_{\lfloor\varepsilon n\rfloor+1,n+\lfloor \varepsilon n\rfloor}\right)\right]}{\mathbb E[Z_n^*]}
=\frac{\EE[\sup_{[0,1+\varepsilon]}\Delta-
     \max(0,\sup_{[\varepsilon,1+\varepsilon]}\Delta)]}{\EE[\sup_{[0,1]}\Delta]}\, $$
and that the strict positivity of this limit will ensure \eqref{AAA}. 
%Now to study this strict positivity observe that $\sup_{(0,1+\varepsilon)}\Delta\ge     \max(0,\sup_{[\varepsilon,1+\varepsilon]}\Delta)$ with equality if and only if $\sup_{[0,1+\varepsilon]}\Delta=\sup_{[\varepsilon,1+\varepsilon]}\Delta$. 
In particular, if there exists an $\varepsilon$ such that 
\begin{equation} \label{eqn:AAAA}
\mathbb P\left(\sup_{[\varepsilon,1+\varepsilon]}\Delta<0
,\ \sup_{[0,1+\varepsilon]}\Delta>0
\right)>0\, ,
\end{equation} 
then \eqref{AAA} holds true.
In order to
find such an $\varepsilon$, we proceed as follows.
Let $p:=\mathbb P\left( \sup_{[0,1+\varepsilon]}\Delta=0 \right)$ and $ p':=\mathbb P\left( \Delta_1<0 \right)$.
We know that $ p< p'$, so that $1-\frac { p'- p}2\in(0,1)$.
Choose $a$ such that $\mathbb P(\sup_{[0,1]}\Delta<a)>1-\frac { p'- p}2$, i.e.
$$\mathbb P\left(\sup_{[0,1]}(\Delta_{\varepsilon+\cdot}-\Delta_{\varepsilon})<a\right)>1-\frac { p'- p}2  \, .$$
Now fix $\varepsilon$
such that $ \mathbb P(\Delta_\varepsilon<-a)=\mathbb P(\varepsilon^{\alpha}\Delta_1<-a)>\frac { p'+ p}2$, so that
\begin{eqnarray*}
 p&=&1-\frac { p'- p}2+ \frac { p'+ p}2-1<\mathbb P\left(\sup_{[0,1]}(\Delta_{\varepsilon+\cdot}-\Delta_{\varepsilon})<a\right)+\mathbb P(\Delta_\varepsilon<-a)-1\\
&\le& \mathbb P\left(\sup_{[0,1]}(\Delta_{\varepsilon+\cdot}-\Delta_{\varepsilon})<a\right)+\mathbb P(\Delta_\varepsilon<-a)-
\mathbb P\left(\sup_{[0,1]}(\Delta_{\varepsilon+\cdot}-\Delta_{\varepsilon})<a\ \mbox{or}\ \Delta_\varepsilon<-a\right)\\
&\le &\mathbb P\left(\Delta_\varepsilon<-a,\ \sup_{[0,1]}(\Delta_{\varepsilon+\cdot}-\Delta_{\varepsilon})<a 
       \right)\le  \mathbb P\left(\sup_{[\varepsilon,1+\varepsilon]}\Delta<0\right)\, ,
\end{eqnarray*}
which implies (\ref{eqn:AAAA}) and thus \eqref{AAA}.
\end{proof}

\begin{rqe}\label{Condcompl} It follows from our proof that, in Theorem \ref{thm:lower}-Case (ii), Conditions \eqref{eqn:hypbn} and \eqref{Hypkappa} can be replaced by
the following condition
\begin{equation}\label{Hypcompliquee}
\limsup_{n\rightarrow +\infty} (\mathbb E[Z_n^*])^{-1}\mathbb E\left[(Z_{n+\lfloor \varepsilon n\rfloor}^*-Z_{n+\lfloor \varepsilon n\rfloor}) 
\mathbf   1_{\{\sup_{k=1,\ldots,\lfloor
         \varepsilon n\rfloor}   (Z_{k}-Z_{k+1})> b_{dn}\}}\right]<\rho-1,
\end{equation}
%$$\limsup_{n\rightarrow +\infty} (\mathbb E[Z_n^*])^{-1}\Vert Z^*_{n+\lfloor \varepsilon n\rfloor}-Z_{n+\lfloor \varepsilon n\rfloor}\Vert_p (\varepsilon n\mathbb P(-Z_1>b_{\lfloor dn\rfloor}))^{1/q}<\rho-1,$$
%where $q$ is the conjugate of $p$ (i.e.\ $q$ is such that $1/p+1/q=1$) if it happens that $\kappa=\infty$ but \eqref{eqn:notregvar} holds.
\end{rqe}
\begin{proof}[Proof of Corollary \ref{thmbounded}]
%\eqref{majobounded} comes directly from 
%(\ref{EQUA1}) and (\ref{EQUA2}).
%The first item comes then from \eqref{eqn:notregvar}.
%For the second item, 
We apply (i) of Theorem \ref{thm:lower} with any $\varepsilon>0$
and $\rho:=(1+\varepsilon)^\gamma$ and let $\varepsilon\rightarrow 0$.
\end{proof}

%
%
%
%
%
%
%
%
% \section{Applications} \label{sec:examples}
\section{Sums of stationary sequences and fractional Brownian motion}  \label{sec:resultssumsof}
Let $(X_i)_{i\geq 0}$ be a stationary centered Gaussian sequence with variance $1$ and correlations 
$r(j):=\EE [X_0 X_j] = \EE[ X_k X_{j+k} ]$ satisfying as $n\rightarrow +\infty$,
\begin{equation} \label{eqn:lrd}
\sum_{i,j=1}^n r(i-j) = n^{2H} \ell(n), 
\end{equation}
where $H\in (0,1)$ and $\ell$ is a slowly varying function at infinity. We are interested in the persistence probabilities of 
$\left(Z_n:=\sum_{i=1}^n X_i\right)_{n\ge 0}$ (with the usual convention $Z_0:=0$).
We recall that the scaling limit of $(Z_n)_ {n\ge 0}$ is the fractional Brownian motion $B_H$ with Hurst parameter $H$, (see \cite{taqqu}, \cite[Theorem 4.6.1]{Whitt}):
\begin{equation} \label{eqn:weakconvergencetofbm}
\left( n^{-H} \ell(n)^{-1/2} Z_{[ nt ]}\right)_{t\geq 0}   \mathop{\Longrightarrow}_{n\rightarrow\infty}^{\mathcal{L}} \left( B_H(t)\right)_{t\geq 0},
\end{equation}
and that $B_H$ is a real centered Gaussian process with
covariance function
\[
\mathbb E[B_H(t)B_H(s)]=\frac 12(t^{2H}+s^{2H}-|t-s|^{2H}).
\]
A sequence satisfying relation (\ref{eqn:lrd}) is said to have {\it long-range dependence} if $H>1/2$. We refer to \cite{samorodnitsky} for a recent overview of the field.

In this setup, we obtain the following theorem.
\begin{thm} \label{thm:statseq}
Assume $(X_i)_{i\ge 0}$ is a stationary centered Gaussian sequence such that (\ref{eqn:lrd}) holds with $H\in (0,1)$ and $\ell$ slowly varying. 
Then, for every $a>0$ there is some constant $c>0$ such that, for every $n\geq 1$,
\begin{equation} \label{eqn:thmlrd1}
c^{-1}\ n^{-(1-H)} \frac{\sqrt{\ell(n)}}{\sqrt{\log n}}\leq
\PP\left(Z_n^* <0 \right)\mbox{ and }
\PP\left( Z_n^* \leq -a \right) \leq  c\ n^{-(1-H)} \sqrt{\ell(n)}.
\end{equation}
If moreover, 
$\inf_{n\ge 1} \sum_{i=1}^n r(i-1)=\inf_{n\ge 1}\mathbb E[Z_1Z_n]> 0$,
then, for every $b\in\mathbb R$, there is some constant $c>0$ such that
\begin{equation} \label{eqn:thmlrd2}
\forall n\ge 1~:\quad  c^{-1} n^{-(1-H)} \frac{\sqrt{\ell(n)}}{\sqrt{\log n}}e^{-c\sqrt{\log n}}\leq \PP\left( Z_n^* \leq b \right) \leq   n^{-(1-H)} {\sqrt{\ell(n)}}e^{c\sqrt{\log n}}.
\end{equation}
If moreover the correlation function $r$ is non-negative (which implies that $H\ge 1/2$) then, for every $b\in\mathbb R$, there is some constant $c>0$ such that
\begin{equation} \label{eqn:thmlrd3}
\forall n\ge 1~:\quad  c^{-1} n^{-(1-H)} \frac{\sqrt{\ell(n)}}{\sqrt{\log n}}\leq \PP\left(Z_n^* \leq b \right) \leq  c\ n^{-(1-H)} \sqrt{\ell(n)}.
\end{equation}
\end{thm}
Note that, for negative $b$, the second inequality in \eqref{eqn:thmlrd1} provides a more precise upper bound than
the one of \eqref{eqn:thmlrd2} and that, for positive $b$,
the first inequality in \eqref{eqn:thmlrd1} provides a more precise 
lower bound that the one of \eqref{eqn:thmlrd2}.
\begin{proof}[Proof of Theorem~\ref{thm:statseq}]

Let $\sigma_n := \sup_{k=1,\ldots,n}\Vert Z_k\Vert_2$.
We set $a_n : = \Vert Z_n\Vert_2 = n^{H}\sqrt{\ell(n)}$.
Due to Karamata's characterization of slowly varying functions \cite{Karamata}, there exist two functions $\varepsilon,L_0$
such that $L_0(\infty):=\lim_{t\rightarrow +\infty}L_0(t)$ exists in $(0,+\infty)$
and $\lim_{t\rightarrow +\infty}\varepsilon(t)=0$ and such that
$$\forall n\in\mathbb N^*,\quad a_n:= L_0(n)\, n^{H}e^{\int_1^n\frac{\varepsilon(t)}t\, dt}\, .$$
We first prove that $\sigma_n\sim a_n$ as $n\rightarrow +\infty$.
Since $a_n\le\sigma_n$, it is enough to prove that $\limsup_{n\rightarrow +\infty}\frac{\sigma_n}{a_n}\le 1$.
Given $\varepsilon_0\in(0,H)$, there exists an integer $k_0\ge 1$
such that
$$ \forall t\ge k_0,\quad |\varepsilon(t)|\le\varepsilon_0\quad\mbox{and}\quad  (1-\varepsilon_0)L_0(\infty)\le L_0(t)\le (1+\varepsilon_0)L_0(\infty)\, .$$
For this choice of $k_0$ and for every $n\ge k_0$, we have
\[
\sup_{k=k_0,...,n}\frac{a_k}{a_n}\le
    \sup_{k=k_0,...,n}\frac{L_0(k)}{L_0(n)}\left(\frac kn\right)^He^{\int_k^n\frac{\varepsilon_0}t\, dt}\\
\le\frac{1+\varepsilon_0}{1-\varepsilon_0}\sup_{k=k_0,...,n}\left(\frac kn\right)^{H-\varepsilon_0}
\]
and so
$$\limsup_{n\rightarrow +\infty}\frac{\sigma_n}{a_n}
   \le\limsup_{n\rightarrow +\infty}\left(\frac{\max_{k=1,...,k_0}a_k}{a_n}+ \frac{1+\varepsilon_0}{1-\varepsilon_0}\right)=\frac{1+\varepsilon_0}{1-\varepsilon_0}\, .$$
This inequality being true for every $\varepsilon_0>0$, we conclude
that  $\limsup_{n\rightarrow +\infty}\frac{\sigma_n}{a_n}\le 1$. and so that $\sigma_n\sim a_n$.\\
Note that, setting $b_n:= n^{H}e^{\int_1^n\frac{\varepsilon(t)}t\, dt}$, there exists a $c_1>0$ such that, for every positive integer $n$,
$ a_n\le\sigma_n\le c_1b_n$ and such that $c_1^{-1}b_n\le a_n$ for every $n$ large enough so that $a_n>0$. Let us denote by $D_n$ the Dudley integral
\begin{equation}\label{Dudley}
D_n :=  \int_0^{\sigma_n/2} \sqrt{\log N(n,t)} \dd t,
\end{equation}
where $N(n,t)$ is the smallest number of closed balls in $\{0,\ldots,n\}$ of radius $t$ for the pseudo-metric $d(k,\ell)=a_{|k-\ell|}$ which form
a covering of $\{0,\ldots,n\}$.
Note that if $c_1b_r\leq t$ for some $r\in\{1,...,n-1\}$ then, for every integer $k\ge 0$, the set $\{k,.., k+2r\}$ is contained in a closed ball of center $k+r$ and of radius $\sigma_r\le t$, therefore $N(n,t)\leq
\left\lceil \frac{n+1}{2r+1}\right\rceil\le \lceil n/r\rceil \le n/r+1\le 2n/r$. Further, trivially $N(n,t)\leq n+1$ for any $t$ and $N(n,t)=1$ for $t>c_1b_n\ge \sigma_n$. 
Therefore, for $\vartheta\in(0,1)$, for $n$ large enough,
\begin{eqnarray*}
%\int_0^{a_n} \sqrt{\log N(n,t)}\, \dd t 
D_n &\leq& \int_0^{b_n/\sqrt{\log (n+1)}} \sqrt{\log (n+1)} \,\dd t + \int_{b_n/\sqrt{\log (n+1)}}^{c_1b_n} \sqrt{\log N(n,t)} \,\dd t\\
&\leq& b_n +\sum_{k=n^{\vartheta}}^{n} 
     \int_{c_1 b_{k-1}}^{c_1b_{k}} \sqrt{\log (2n/(k-1))}\, dt\\
&=& b_n +\sum_{k=n^{\vartheta}}^{n} 
       c_1(b_k-b_{k-1}) \sqrt{\log (2n/(k-1))}.
\end{eqnarray*}
But, due to the form of $b_k$,
$\frac{b_k-b_{k-1}}{b_k}
=\frac{k^H-(k-1)^H}{k^H}+\frac{(k-1)^H\left(1-e
        ^{\int_{k}^{k-1}\frac{\varepsilon(t)}t\, dt}\right)}{k^H}=O(k^{-1})$. Let $\upsilon\in(0,H)$. Therefore there exists a $c_2>0$
such that
\begin{eqnarray}
D_n &\leq& b_n +c_2b_n\frac 1n\sum_{k=n^{\vartheta}}^{n} 
       \frac {n b_k}{k b_n} \sqrt{\log (2n/(k-1))}\nonumber\\
&\leq& b_n +c_2b_n\frac 1n\sum_{k=n^{\vartheta}}^{n} 
       (k/n)^{H-1-\upsilon}=O(a_n)\, ,\label{majoEn}
\end{eqnarray}
for $n$ large enough (where we used again the expression of $b_n$).
Due to \cite[Corollary 2, p.181]{Lifshits},
$$
\forall u>0~:\quad \P \Big(  Z_n^* > u+4\sqrt{2}D_n\Big)  \le 
         \mathbb P(Z_1>u/\sigma_n)\, .
$$
Therefore,
\begin{eqnarray*}
\EE\left[ \sup_{k=1,\ldots, n} Z_k^2\right] 
&=&\int_0^{+\infty}  \P \Big(\sup_{k=1,\ldots, n} |Z_k|^2  > u\Big)\, \dd u\\
&\leq& 32 D_n^2   +  \int_{32 D_n^2}^\infty  \P \Big(\sup_{k=1,\ldots, n} |Z_k|  >\sqrt{u}\Big)\, \dd u\\
&\leq& 32 D_n^2   +  \int_{32 D_n^2}^\infty  \left(\P \Big(\sup_{k=1,\ldots, n} Z_k  >\sqrt{u}\Big)+\P \Big(\sup_{k=1,\ldots, n} (-Z_k)  >\sqrt{u}\Big)\right)\, \dd u\\
  &\le& 32D_n^2 +   2\int_{4\sqrt{2} D_n}^\infty
 \P \Big( Z_n^* >x\Big) 2x\,  \dd x\\
  &=& 32D_n^2 +   \int_{0}^\infty
 \P \left(Z_1>u/\sigma_n\right) 4(4\sqrt{2}D_n+u)\,  \dd u\, \\
  &=&32D_n^2 +   \sigma_n\int_{0}^\infty
 \P \left(Z_1>v\right) 4(4\sqrt{2}D_n+v \sigma_n) \, \dd v\, \\
&\leq &32D_n^2 +   C(\sigma_nD_n+ \sigma_n^2)=O(a_n^2)\, ,
\end{eqnarray*}
due to \eqref{majoEn} and to the fact that $\sigma_n\sim a_n$.
Thus, we have proved that 
$\Vert Z_n^*\Vert_2=O(n^{H} \sqrt{\ell(n)})$
and thus the uniform integrability of 
$(Z_n^*/ (n^H \sqrt{\ell(n)}))_n$,
which combined with \eqref{eqn:weakconvergencetofbm} yields
$$
\frac{1}{n^H \ell(n)^{1/2}}\,\EE\left[  Z_n^* \right] \to \EE \left[\sup_{t\in[0,1]} B_H(t)\right] \in (0,\infty).
$$
Hence, Theorems \ref{THMlem} and \ref{thm:lower}-(ii) with $p=2$ 
(observing that $Z$ is reversible and thus that $Z^*_n-Z_n$ has the same distribution as $Z_n^*$)
and Remark \ref{MAINTHM} imply
both \eqref{eqn:thmlrd1} and \eqref{eqn:thmlrd3}
(due to Lemma~\ref{lem:positiveassociation} for \eqref{eqn:thmlrd3} since in this case the increments of $Z$ 
are positively correlated standard Gaussians and thus are  positively associated and have the same distributions). The fact that the lower bound holds true
for every $n$ (up to changing the constant $c$) comes from the fact that
$\mathbb P(Z_n^*<m)$ is decreasing in $n$ and thus cannot be 0 due to the asymptotic lower bound.

It remains to prove \eqref{eqn:thmlrd2}. 
We are now going to use Proposition~1.6 in \cite{aurzadadereich}. For this purpose, let us denote by $\mathcal H$ the reproducing kernel Hilbert space of the process $(Z_k)$, i.e. the Hilbert space made of elements of $\mathbb R^\mathbb N$, generated
 by $\{K(n,\cdot),\ n\in\mathbb N\}$ together with the scalar product
given by $\langle K(n,\cdot),K(m,\cdot)\rangle_{\mathcal H}=K(n,m)$, with $K(n,m):=\mathbb E[Z_nZ_m]$.
Let $a>0$.
Assume that 
$\kappa:=\inf_n \sum_{i=1}^n r(i-1)=\inf_n K(1,n)> 0$.
Consider the function $f(n):=2 aK(1,n)/\kappa \in \mathcal H$. 
Note that
$f(k)\geq 2a$, $k\geq 1$, and that
$$
\|f\|_{\mathcal H}^2 = \langle f,f\rangle_{\mathcal H} = \left(\frac{2a}{\kappa}\right)^2\langle K(1,.),K(1,.)\rangle_{\mathcal H}= (2a/\kappa)^2 K(1,1) = (2a/\kappa)^2,
$$
where we used $K(1,1)=\mathbb E[Z_1^2]=1$ by assumption. Using the last two properties and Proposition~1.6 in \cite{aurzadadereich} we can conclude that
\begin{align*}
\P(  Z_n^* \leq -a)
& = \P( \forall k\leq n \, :\,  Z_k + f(k) \leq -a + f(k) )
\\
& \geq \P( \forall k\leq n \, :\, Z_k + f(k) \leq a )
\\
& \geq \P( Z_n^* \leq a ) \, \exp( - \sqrt{2 \|f\|_{\mathcal H}^2 \log ( 1/\P(Z_n^* \leq a ))} - \|f\|_{\mathcal H}^2/2)\, .
\end{align*}
Inserting now the lower bound given by \eqref{eqn:thmlrd1} and the value for $\|f\|_{\mathcal H}$ we obtain, for every $\alpha>1-H$,
$$
\P(  Z_n^* \leq -a)
\geq \P(  Z_n^* \leq a)
 \, \exp( - a\sqrt{ 8 \alpha \kappa^{-2} \log n} - 2a^2/\kappa^2),
$$
for $n$ large enough.
This combined with \eqref{eqn:thmlrd1} gives \eqref{eqn:thmlrd2}. 
\end{proof}
A simple example in Theorem~\ref{thm:statseq} is $(Z_n=B_H(n))_n$. Theorem~\ref{thm:statseq} holds and gives
$$
\forall a>0~:\quad \PP\left( \max_{k=1,\ldots,n} B_H(k) \leq -a \right) \leq O( n^{-(1-H)}) .
$$
Let $a>0$. When $H\in (\frac{1}{2},1)$, due to Lemma~\ref{lem:positiveassociation}, we obtain
$$
\quad \PP\left( \max_{k=1,\ldots,n} B_H(k) \leq a \right) =O( n^{-(1-H)}),
$$
while for $H\in(0,\frac{1}{2})$ we get
$$
\quad \PP\left( \max_{k=1,\ldots,n} B_H(k) \leq a \right)=O\left( n^{-(1-H)} e^{c \sqrt{\log n}}\right).
$$
From these computations, we deduce an upper bound for the persistence probability of the continuous-time fractional Brownian motion because trivially:
$$
\mathbb P\left(\sup_{t\in[0,T]}B_H(t)\le a\right)\le \mathbb P\left(\max_{
k=1,\ldots,\lfloor T\rfloor
}B_H(k)\le a\right).
$$
We also get a lower bound, as detailed in the next result.
\begin{thm}[Fractional Brownian motion]\label{fBM}
Let $H\in(1/2,1)$. Let $B_H$ be a fractional Brownian motion
of Hurst parameter $H$. Then, for every $a>0$,
there exists a $c>0$ such that
\[
c^{-1} T^{-(1-H)}(\log T)^{-\frac 1{2H}}\le \mathbb P\left(\sup_{t\in[0,T]}
         B_H(t)\le a\right)=O( T^{-(1-H)}).
\]
The lower bound holds for any $H\in(0,1)$.
\end{thm}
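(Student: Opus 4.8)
The plan is to deduce Theorem~\ref{fBM} from the discrete-time result (Theorem~\ref{thm:statseq}) applied to a suitable stationary sequence, plus a separate direct argument for the lower bound that works for all $H\in(0,1)$. Both parts rely on the scaling self-similarity $B_H(\lambda t)\deq \lambda^H B_H(t)$ and on comparing the supremum of the continuous process over $[0,T]$ with the maximum of a discretisation.

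\textbf{Upper bound.} First I would fix the sequence $X_k := B_H(k)-B_H(k-1)$, $k\ge 1$. Since $B_H$ has stationary increments, $(X_k)$ is a stationary Gaussian sequence, and $Z_n:=\sum_{k=1}^n X_k = B_H(n)$ is a stationary-increment process whose natural scaling is exactly $n^H$; the covariance decay of $(X_k)$ is the classical one giving $\operatorname{Var}(Z_n)\sim n^{2H}$ (here $H>1/2$ so the increments are positively correlated, matching the hypothesis range of Theorem~\ref{thm:statseq}). Then I would invoke Theorem~\ref{thm:statseq} to get $\PP(\max_{k\le T} B_H(k)\le 1)=O(T^{-(1-H)})$ along integer times, together with the order $\EE[\max_{k\le T}B_H(k)]\asymp T^H$ which is standard for fBm. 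The one genuine point to check is that passing from $\max_{k\le \lfloor T\rfloor} B_H(k)\le 1$ to $\sup_{t\in[0,T]}B_H(t)\le 1$ costs only a constant factor: the event $\{\sup_{[0,T]}B_H\le 1\}$ is contained in $\{\max_{k\le\lfloor T\rfloor}B_H(k)\le 1\}$, so in fact the continuous-time probability is \emph{smaller}, and the upper bound $O(T^{-(1-H)})$ transfers immediately with no loss. So the upper bound is essentially free once Theorem~\ref{thm:statseq} is in hand.

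\textbf{Lower bound.} For the lower bound valid for every $H\in(0,1)$ I would \emph{not} go through the discrete sequence (whose increments are negatively correlated when $H<1/2$, outside the scope of the stationary-sequence theorem), but argue directly. The target is $\PP(\sup_{[0,T]}B_H\le 1)\ge c^{-1}T^{-(1-H)}(\log T)^{-1/(2H)}$. The natural route is the standard one for fBm persistence lower bounds: use the Gaussian correlation-type / FKG-type decomposition of the interval $[0,T]$ into dyadic blocks, or alternatively a one-step argument combined with self-similarity, $\PP(\sup_{[0,T]}B_H\le 1)\ge \PP(\sup_{[0,1]}B_H\le \delta)\,\PP(\sup_{[0,T]}B_H\le 1\mid B_H \text{ small on }[0,1])$, iterated over scales $1,2,4,\dots,T$. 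The $(\log T)^{-1/(2H)}$ factor should emerge from summing the contributions of $\log_2 T$ scales, each contributing a bounded factor except that at the end one pays a Gaussian tail cost $\exp(-c(\log T)^{1/??})$ — more precisely the extra logarithmic loss comes from requiring the process to be of size $O(1)$ rather than size $O(T^H)$ at the final scale, which by the Gaussian bound $\PP(\sup_{[0,1]}B_H\le \eps)\ge \exp(-c\eps^{-1/H})$ costs $(\log T)$-type corrections when $\eps$ is taken as a negative power of $\log T$. I would optimise the small-ball radius $\eps$ against the number of scales to land exactly on the exponent $1/(2H)$.

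\textbf{Main obstacle.} The delicate part is the lower bound: making the multi-scale / conditioning argument rigorous for a correlated Gaussian process requires controlling the conditional law of $B_H$ on a later interval given its behaviour on an earlier one, and the long-range dependence of fBm means the conditioning does not decouple cleanly. The cleanest path is probably to use a Slepian-type comparison or the Gaussian correlation inequality to bound $\PP(\bigcap_j A_j)$ from below by a product over dyadic scales $A_j=\{\sup_{[2^{j-1},2^j]}B_H\le 1\}$, then self-similarity turns each factor into $\PP(\sup_{[0,1]}B_H\le 2^{-(j-1)H})$, and summing $-\log$ of these over $j=1,\dots,\log_2 T$ via the small-ball estimate $-\log\PP(\sup_{[0,1]}B_H\le \eps)\asymp \eps^{-1/H}$ gives a divergent sum unless one truncates; the truncation level is what produces the $(\log T)^{-1/(2H)}$ correction. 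Getting the exponent on the logarithm exactly right — rather than some other power — is where the real bookkeeping lies, and I expect that to be the crux of the argument.
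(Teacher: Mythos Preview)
Your upper bound is exactly the paper's argument: discretise via $X_k=B_H(k)-B_H(k-1)$, apply Theorem~\ref{thm:statseq}, and use the trivial inclusion $\{\sup_{[0,T]}B_H\le 1\}\subseteq\{\max_{k\le\lfloor T\rfloor}B_H(k)\le 1\}$.

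For the lower bound, however, you have both a mistaken premise and a different (and shakier) route. Your premise --- that the discrete approach is unavailable for $H<1/2$ because the increments are negatively correlated --- is wrong: the first conclusion \eqref{eqn:thmlrd1} of Theorem~\ref{thm:statseq} gives
\[
\mathbb P\Big(\max_{k\le N}B_H(k)\le 0\Big)\ \ge\ c\,N^{-(1-H)}(\log N)^{-1/2}
\]
for \emph{every} $H\in(0,1)$, because its proof runs through hypothesis~(ii) of Theorem~\ref{MAINTHM} (the $L^2$ bound on the maximum), not through positive association. The paper exploits this directly. It discretises $[0,n]$ with mesh $m_n^{-1}$ where $m_n=\lfloor (C\log n)^{1/(2H)}\rfloor$, uses self-similarity to rewrite $\mathbb P(\max_{k\le nm_n}B_H(k/m_n)\le 0)=\mathbb P(\max_{k\le nm_n}B_H(k)\le 0)$, and controls the gap between the continuous supremum and the fine-grid maximum by a union bound on the oscillations over each subinterval, using the Gaussian tail $\mathbb P(\sup_{[0,1]}B_H>x)\le e^{-ax^2}$ after rescaling. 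With $N=nm_n$ one gets $N^{-(1-H)}(\log N)^{-1/2}\asymp n^{-(1-H)}(\log n)^{-(1-H)/(2H)-1/2}=n^{-(1-H)}(\log n)^{-1/(2H)}$, which is where the exponent $1/(2H)$ comes from --- not from any multi-scale bookkeeping.

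Your proposed dyadic Slepian/small-ball scheme is a genuinely different idea, but as sketched it has gaps. The Gaussian correlation inequality applies to symmetric convex sets, not one-sided events $\{\sup\le 1\}$; FKG/positive association of increments would let you factor such events, but only for $H>1/2$, which defeats the stated goal of covering all $H$. And the claim that $\mathbb P(A_j)$ equals $\mathbb P(\sup_{[0,1]}B_H\le 2^{-(j-1)H})$ ignores that $B_H$ is not restarted at $0$ at time $2^{j-1}$. Summing $-\log$ of small-ball probabilities $\asymp \varepsilon^{-1/H}$ over $\log_2 T$ scales would in any case give a stretched-exponential loss, not a logarithmic one, so the route does not obviously converge to the right power of $\log T$. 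The paper's discretisation-plus-self-similarity argument is both simpler and uniform in $H$.
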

This result improves some results of \cite{Molchan1999,Aurzada}
when $H>1/2$.
We remark that the upper bound also holds for $H<1/2$ with an additional logarithmic factor $(\log T)^{(2-H)/H+o(1)}$, by \cite{Aurzada}, but we do not improve this result here. 
\begin{proof}[Proof of Theorem \ref{fBM}]
It remains to prove the lower bound.
Due to \eqref{eqn:thmlrd1}, there exists a $c>0$ such that
$$
\mathbb P\left(\max_{k=1,\ldots,n}B_H(k)< 0\right)\ge c \frac{n^{-(1-H)}}{\sqrt{\log n}}.
$$
Let $C>0$.
Moreover,
$$\max_{t\in[0,n]}B_H(t)-\max_{k=0,\ldots,n\lfloor (C\log n)^{1/(2H)}\rfloor -1}B_H(n_{k+1})
      \le \max_{k=0,\ldots,n\lfloor (C\log n)^{1/(2H)}\rfloor-1}A_k,$$
with 
$$A_k:=\max_{s\in [n_k,n_{k+1}]}\left( B_H (s) - B_H\left(n_{k+1}\right)\right)$$
where $n_k:= \frac{k}{\lfloor (C\log n)^{1/(2H)}\rfloor}$.
Then, using the stationarity and the self-similarity of the fractional Brownian motion, 
\begin{eqnarray*}
&\ &\mathbb P\left(\max_{k=0,\ldots,n\lfloor (C\log n)^{1/(2H)}\rfloor-1}A_k>a\right)\le n(C\log n)^{1/(2H)}\mathbb P\left(\max_{t\in [0,1]}B_H(tn_1)-B_H(n_1)>a \right)\\
&\le& n(C\log n)^{1/(2H)}\mathbb P\left(\max_{t\in [0,1]}B_H(t)-B_H(1)>a\lfloor (C\log n)^{1/(2H)}\rfloor^{H}\right)\\
&\le& n(C\log n)^{1/(2H)}\mathbb P\left(\max_{t\in [0,1]}B_H(t)>a\lfloor (C\log n)^{1/(2H)}\rfloor^{H}\right)\\
&\le& n(C\log n)^{1/(2H)} e^{-ba^2  C\log n}
\end{eqnarray*}
for some constant $b>0$ (see Statement 2 in \cite{Molchan1999} for instance), 
where we used the reversibility of $B_H$ at the penultimate line. 
We choose $C=C(a)$ large enough so that this last quantity is in $o(n^{-(1-H)}(\log n)^{-\frac 1{2H}})$. So,
using the self-similarity of $B_H$, we obtain
\begin{eqnarray*}
&& \mathbb P\left(\max_{t\in[0,n]}B_H(t)\le a\right)\\
&\ge& \mathbb P\left(\max_{k=1,\ldots,n\lfloor(C\log n)^{1/(2H)}\rfloor}B_H\left(\frac{k}{\lfloor (C\log n)^{1/(2H)}\rfloor}\right)< 0\right)-o(n^{-(1-H)}(\log n)^{-\frac  1{2H}})\\
&\ge& \mathbb P\left(\lfloor (C\log n)^{1/2H}\rfloor^{-H}
        \max_{k=1,\ldots,n\lfloor (C\log n)^{1/(2H)}\rfloor}B_H(k)< 0\right)-o(n^{-(1-H)}(\log n)^{-\frac 1{2H}})\\
&\ge&\mathbb P\left(\max_{k=1,\ldots,n\lfloor(C\log n)^{1/(2H)}\rfloor}B_H(k)< 0\right)-o(n^{-(1-H)}(\log n)^{-\frac 1{2H}})\\
&\ge& \frac{c\, C^{\frac{H-1}{2H}}}
    {2}\, n^{-(1-H)}(\log n)^{-\frac 1{2H}},
\end{eqnarray*}
for every $n$ large enough, due to \eqref{eqn:thmlrd1}.
\end{proof}

\section{Random walks in random sceneries} \label{sec:rwrs}
Random walks in random sceneries were introduced independently by H. Kesten and F. Spitzer \cite{KS} and by A. N. Borodin \cite{Borodin}.
Let $d$ be a positive integer and $S = (S_n)_{n\ge 0}$ be a random walk in $\mathbb{Z}^d$ starting at $0$,
i.e., $S_0 = 0$ and
$
X_n:= S_n-S_{n-1}, n \ge 1$ is a sequence of i.i.d.\ $\mathbb{Z}^d$-valued random variables.
Let $\xi = (\xi_x)_{x \in \mathbb{Z}^d}$ be a field of i.i.d.\ one-dimensional real
valued random variables independent of $S$.
The field $\xi$ is called the random scenery.
{\it The random walk in random scenery (RWRS)} $Z := (Z_n)_{n \ge 0}$ is defined 
by setting $Z_0 := 0$ and, for every integer $n\ge 1$,
$Z_n := \sum_{i=1}^n \xi_{S_i}$. 
We will denote by $\mathbb{P}$ the joint law of $S$ and $\xi$.
Limit theorems for RWRS have a long history, we refer to \cite{GuPo} for a complete review. In particular, RWRS have 
stationary increments which are positively associated conditionally to the random walk $S$.

We assume without any loss of generality that the support of the distribution of $X_1$ is not contained in a proper subgroup of $\mathbb Z^d$ and that the closed subgroup generated by the support of
the distribution of $\xi_0$ is either $\mathbb Z$ or $\mathbb R$. We consider the case when the distribution of $\xi_0$ is centered and square integrable. Let $W=(W(x))_{x\in\mathbb R}$ be a Brownian motion of variance $\mathbb E[\xi_0^2]$.

When $S$ is  transient, $((n^{-\frac 12} Z_{[nt]})_{t\ge 0})_{n\ge 1}$ converges in distribution in $(D([0,+\infty)),J_1)$
to $(\Delta_t:=c_0W(t))_{t\geq 0}$ for some $c_0>0$ (see \cite{Borodin} and \cite{KS}).

When $S$ is recurrent (which may happen for $d=1$ and $d=2$ only), we assume moreover that the distribution of $X_1$ is in the normal domain of attraction of 
a stable distribution of index $\alpha\in[d,2]$ (if $d=2$, we assume that 
$(n^{-\frac 12} S_n)_{n\geq 1}$
converges in distribution
to a bidimensional gaussian random variable).
\\
If the walk $S$ is recurrent and $\alpha=d$, then
$(((n\log n)^{-\frac 12} Z_{[nt]})_{t\ge 0})_{ n\ge 1}$ converges in distribution  in 
$(D([0,+\infty)),J_1)$
to $(\Delta_t:=c_1W(t))_{t\geq 0}$ for some $c_1>0$ (see \cite{Bo89}).\\
If $S$ is recurrent and $d=1<\alpha$, the following convergence holds in 
$(\mathcal D([0,+\infty)),J_1)$:
$$\left(n^{-\frac{1}{\alpha}} S_{\lfloor nt\rfloor}\right)_{t\geq 0}   
\mathop{\Longrightarrow}_{n\rightarrow\infty}
^{\mathcal{L}} \left( Y(t)\right)_{t\geq 0},$$
where $Y$ is an $\alpha$-stable L\'evy process, which is assumed to be independent of $W$. In this case, in \cite{KS}, Kesten and Spitzer proved 
the following convergence in distribution in $(\mathcal D([0,+\infty)),J_1)$,
$$(n^{- 1+\frac 1{2\alpha}} Z_{[nt]})_{t\ge 0}\mathop{\Longrightarrow}_{n\rightarrow\infty}
^{\mathcal{L}} \left(\Delta_t : =  \int_{\mathbb R} L_t(x) \, \dd W(x)\right)_{t\ge 0},$$
where $(L_t(x))_{x\in\mathbb{R},t\geq 0}$ is a continuous version with compact support (for every $t$) of the local time of the process $Y$ (see \cite{marcusrosen}).

Hence in any of the cases considered above, $((Z_{\lfloor nt\rfloor}/a_n)_{t\ge 0})_ {n\ge 0}$ converges in distribution (with respect to the
$J_1$-metric) to some process $\Delta$, with
\begin{equation} \label{eqn:a_nforrwrs}
a_n:=\left\{ \begin{array}{lll}
n^{1- \frac{1}{2\alpha}} & \text{if} & S\text{ is recurrent and }
 d=1,\ \alpha\in (1,2].\\
\sqrt{n\log n} & \text{if }& S\text{ is recurrent and }
\alpha=d\in\{1,2\}.\\
\sqrt{n} & \text{if} & S\text{ is transient}.
\end{array}
\right.
\end{equation}
For every $y\in\mathbb Z^d$ and every integer $n\ge 1$, we write $N_{n}(y)$ for the number of visits
of the walk $S$ to site $y$ before time $n$, i.e.
$$ N_n(y):=\#\{k=1,\ldots,n\ :\ S_k=y\}.$$
We also write $R_n:=\#\{S_1,\ldots,S_n\}$ for the range of $S$ up to time $n$.
Note that $Z$ can be rewritten as follows:
$$Z_n=\sum_{y\in\mathbb Z^d}\xi_yN_n(y).$$
%\begin{rqe}[Transient RWRS]\label{cvpsrange2}
%Assume that $\beta\in (0,1)$ and that $\mathbb P(\xi_1\in\mathbb Z)=1$.
%Then the RWRS is transient (see for instance \cite{TLL}) and, due to an argument by Derriennic \cite[Lemma 3.3.27]{Zeitouni}, $({\mathcal R}_n/n)_{n\ge 1}$ converges $\mathbb P$-almost surely to $\mathbb P[Z_j\ne 0,\ \forall j\geq 1]$.(There, we consider the ergodic dynamical system $(\Omega,\mu,T)$ given by $\Omega:=(\mathbb Z^d)^{\mathbb Z}\times
%\mathbb Z^{\mathbb Z}$, $\mu:=(\mathbb P_{S_1})^{\otimes\mathbb Z}\otimes (\mathbb P_{\xi_1})^{\otimes\mathbb Z}$ and 
%$T((\alpha_k)_k,(\epsilon_k)_k):=((\alpha_{k+1})_k,
%  (\epsilon_{k+\alpha_0})_k)$ (see for instance \cite{KMC} for its ergodicity, p.\ 162). We set
%$f((\alpha_k)_k,(\epsilon_k)_k)=\epsilon_0$.
%With these choices, $(Z_j)_{j\geq 1}$ has the same distribution under $\mathbb P$
%as $(\sum_{k=1}^j f\circ T^k)_{j\geq 1}$ under $\mu$.)
%\end{rqe}
%We assume from now on that $\beta>1$. Then the RWRS $Z$ is recurrent (see top of the page 2083 in \cite{TLL}).
In this context, we prove the following result
valid under our general assumptions on the walk $S$
(in any dimension $d$ and with $a_n$ given by \eqref{eqn:a_nforrwrs}).
\begin{thm}[Persistence probability for RWRS, first result]\label{rwrs}
%Assume that the $\xi_\ell$'s are centered and $\beta=2$. Let
%$m\in \mathbb{R}$ be such that $\mathbb P(\xi_1\le m)>0$.
\hspace{10cm}\begin{itemize}
\item If $\xi$ is Gaussian, then
 \begin{equation}\label{Gauss}
 \forall m\in\mathbb R,\quad \exists c>0,\ \ \forall n\ge 2,\ \ 
 c^{-1} a_n/(n\sqrt{\log n})\le \mathbb P\left(Z_n^*\le m\right)\le c\, a_n/n.
 \end{equation}
\item Let us assume that $\xi$ is bounded from below. Then,
\begin{itemize}
\item for all $m\in \mathbb R$, there exists a $c>0$ such that
\begin{equation}\label{bounded1}
\forall n\ge 1,\ \mathbb P\left( Z_n^*\le m\right)\le c\, a_n/n.
\end{equation}
\item for all $m\geq 0$, there exists a $c>0$ such that
\begin{equation}\label{bounded2}
\forall n\ge 1,\ \mathbb P\left( Z_n^*\le m\right)\ge c\, a_n/n.
\end{equation}
\item for all $m< 0$ such that $\mathbb P(\xi_1\leq m)>0$, there exists a $c>0$ such that
\begin{equation}\label{bounded3}
\forall n\ge 1,\ \mathbb P\left( Z_n^*\le m\right)\ge c\, a_n/n.
\end{equation}\end{itemize}
\end{itemize}
\end{thm}
We first prove the following results in oder to apply our general theorems.
\begin{prop}\label{LEM0}
As $n$ tends to infinity,
%$$\frac{\max_{k=1,\ldots,n}Z_{k}-\min_{k=1,\ldots,n}Z_{k}}{a_n} \stackrel{\mathcal L}{\longrightarrow} \sup_{t\in[0,1]}\Delta_t-\inf_{t\in[0,1]}\Delta_t.$$
%Moreover,
$$\forall\beta\in(1,2),\ \mathbb E\left[
\max_{j=1,...,n}|Z_j|
^{\beta}\right]=O(a_n^{\beta})\quad\mbox{and}\quad\lim_{n\rightarrow +\infty}\frac{\mathbb E\left[Z_n^*\right]}{a_n}=\mathbb E\left[\sup_{t\in[0,1]}\Delta_t \right]\, .$$
%for every $\beta'>1$ such that $\mathbb E[|\xi_0|^{\beta'}]<\infty$.
\end{prop}
\begin{proof}
Due to the convergence for the $J_1$-topology of $((a_n^{-1} Z_{\lfloor nt\rfloor})_t)_{n\ge 1}$ to $(\Delta_t)_t$ as $n$ goes to infinity,
we know that $(a_n^{-1}Z_n^*)_{n\ge 1}$ converges in distribution
to $\sup_{t\in[0,1]}\Delta_t$
as $n$ goes to infinity (see Section 12.3 in \cite{Whitt}).
Let us prove that $(a_n^{-1}Z_n^*)_{n\ge 1}$ is
uniformly integrable. 
To this end we will use the fact that, conditionally to
the walk $S$, the increments of $(Z_n)_ {n\ge 0}$ are centered and positively associated.
Due to Theorem 2.1 of \cite{Gong} (applied with $\Phi=id$, $g(x)=|x|^{\beta}$, $c_k= 1$, $p=2/\beta$), there exists some constant 
$\tilde c>0$ such that
\[
 \mathbb E\left[\max_{j=1,\ldots,n} |Z_j|^{\beta} |S\right]   
 \le \tilde c \, \mathbb E\left[|Z_n|^2|S\right]^{\frac \beta 2},
\]
so, using the H\"older inequality, $\mathbb E\left[
\max_{j=1,\ldots,n} |Z_j|^{\beta}\right] \le \tilde c\,  \mathbb E\left[|Z_n|^2\right]^{\frac \beta 2}$.
It remains to prove that $\mathbb E[|Z_n|^2]=O(a_n^2)$. We observe that 
$\mathbb E\left[|Z_n|^{2}\right]=\mathbb E[\xi_0^2]\mathbb E[V_n]$, 
where $V_n$ is the number of self-intersections up to time $n$
of the random walk $S$, i.e.\ 
$V_n=\sum_x(N_n(x))^2=\sum_{i,j=1}^n{\mathbf 1}_{S_i=S_j}$. 
Standard computations (see Lemma 2.3 in \cite{Bo89} for the case $d=2$, which are exactly the same in $d=1$) give that
$$\mathbb E[V_n]=\sum_{i,j=1}^n\mathbb P(S_{|i-j|}=0) 
   =n+2\sum_{k=1}^n(n-k)\mathbb P(S_k=0) \sim c'(a_n)^2$$
(since $\mathbb P(S_k=0)\sim c_0 k^{-\frac d\alpha}$ when $S$
is recurrent and since $\sum_k\mathbb P(S_k=0)<\infty$ when $S$ is transient \cite{Spitzer})
and the result follows.
\end{proof}
\begin{proof}[Proof of Theorem \ref{rwrs}]
Let us prove $(\ref{Gauss})$. Since the increments of $Z$ are positively associated and identically distributed conditionally to the random walk $S$, 
due to Lemma~\ref{lem:positiveassociation}, it is enough to prove the lower bound for the level $0$ and the upper bound for the negative levels $m$ (Remark that here $\mathbb P(\xi_1\leq m)>0$ for any $m\in \mathbb R$).\\
Proposition \ref{LEM0} ensures that $\mathbb E[Z_n^*]\sim \tilde c_0\, a_n$ for some $\tilde c_0>0$.
Therefore Theorem \ref{THMlem} ensures that, for every $m>0$, $\mathbb P(Z_n^*\le -m)=O(a_n/n)$.\\
Moreover, due to Proposition \ref{LEM0},
$$\Vert Z_n^*-Z_n\Vert_{\beta}=\mathcal O(a_n),$$
so the lower bound for the level $m=0$ comes from
Theorem \ref{thm:lower}-(ii) applied with $p=\beta$, together with Remark \ref{MAINTHM}.\\
Inequalities (\ref{bounded1}), (\ref{bounded2}) and (\ref{bounded3}) can be proved in the same way by remarking that our assumptions on the scenery implies that there exists some $m'>0$ such that $\mathbb P(\xi_1<-m')>0.$
\end{proof}
In the case of RWRS, the proof of Theorem  \ref{thm:lower} can be modified
in order to get better lower bounds. 
\begin{prop}[Persistence probability for RWRS, general case]\label{lowerRWRS}
Let $\beta' >1$ be such that $\mathbb E[\max(0,- \xi_0)^{\beta'}]<\infty$. Then,
\begin{itemize}
\item for all $m\in \mathbb R$, there exists a $c>0$ such that
\begin{equation}\label{bounded1bis}
\forall n\ge 1,\ \mathbb P\left( Z_n^*\le m\right)\le c\, a_n/n.
\end{equation}
\item for all $m\geq 0$, there exists a $c>0$ such that
\begin{equation}\label{bounded2bis}
\forall n\ge 1,\ \mathbb P\left( Z_n^*\le m\right)\ge c\, \left(               \frac {a_n}n\right)^
{
1+\frac{2}{\beta'}
}.
\end{equation}
\item for all $m< 0$ such that $\mathbb P(\xi_1\leq m)>0$, there exists a $c>0$ such that
\begin{equation}\label{bounded3bis}
\forall n\ge 1,\ \mathbb P\left( Z_n^*\le m\right)\ge c\, \left(               \frac {a_n}n\right)^
{
1+\frac{2}{\beta'}
}.
\end{equation}\end{itemize}
\end{prop}
\begin{rqe}
If $\xi_0$ admits moments of every order then 
\[
(a_n/n)^{1+o(1)}\le \mathbb P\left(Z_n^*\le m\right)=O(a_n/n) ,
\quad as\ n\rightarrow +\infty .\]
Indeed writing $\mathbb P(Z_n^*\le m)=(a_n/n)^{\vartheta_n}$,
we get that for every $\beta'>1$, 
$$\limsup_{n\rightarrow +\infty}\vartheta_n\le 1+\frac{2}{\beta'}.$$
\\
By taking $\beta'=2$, the conclusion of Proposition \ref{lowerRWRS} becomes
$$\exists C>1,\quad \forall n\ge 1, \quad C^{-1}\left(\frac{a_n}n\right)^2
        \le \mathbb P(Z_n^*\le m)\le C\frac{a_n}n.$$
%If $1<\beta<2$, the conclusions of Proposition \ref{lowerRWRS} becomes
%$$\exists C>1,\quad \forall n\ge 1, \quad \left(\frac{a_n}n\right)^{1+\frac{1}{\beta-1}+o(1)}        \le \mathbb P(Z_n^*\le m)\le C\frac{a_n}n,$$
%using the fact that  Proposition \ref{lowerRWRS} applies for every $\beta'<\beta$.
\end{rqe}
\begin{proof}[Proof of Proposition \ref{lowerRWRS}]
We just have to prove the lower bound.
It is enough to prove it for $m=0$ (due to Lemma~\ref{lem:positiveassociation}). 
Let $\beta\in(1,2)$. From H\"{o}lder's inequality, we have
%Recall that, due to Proposition \ref{LEM0}, $\mathbb E[Z_n^*]\sim\tilde c_0\, a_n$.
\begin{eqnarray*}
&\ &\mathbb E\left[(Z_{n+\lfloor \varepsilon n\rfloor}^*-Z_{n+\lfloor \varepsilon n\rfloor}) 
\mathbf   1_{\{\sup_{k=1,\ldots,\lfloor
         \varepsilon n\rfloor}   (Z_{k}-Z_{k+1})> b_{dn}\}}\right]\\
&\le &\Vert Z_{n+\lfloor \varepsilon n\rfloor}^*-Z_{n+\lfloor \varepsilon n\rfloor}\Vert_{\beta}\left(     \mathbb P\left(\sup_{k=1,\ldots,\lfloor        \varepsilon n\rfloor}   (Z_{k}-Z_{k+1})> b_{dn}\right)\right)^{1-\frac 1\beta}\, .
\end{eqnarray*}
But, setting $\mathcal F_0$ for the $\sigma$-algebra generated by $S$,
\begin{eqnarray*}
\mathbb P\Big(\sup_{k=1,\ldots,{\lfloor \varepsilon n\rfloor}}(Z_{k}-Z_{k+1})> b_{dn} \Big|  \mathcal F_0\Big)&\le &
R_{\lfloor \varepsilon n\rfloor}\  \mathbb P(-Z_1>b_{dn})\\
&\le& R_{\lfloor \varepsilon n\rfloor}\ 
\mathbb E[\max(0,-\xi_0)^{\beta'}] \,  b_{dn}^{-\beta'}.
\end{eqnarray*}
Therefore 
\begin{eqnarray*}
&\ &\mathbb E\left[(Z_{n+\lfloor \varepsilon n\rfloor}^*-Z_{n+\lfloor \varepsilon n\rfloor}) 
\mathbf   1_{\{\sup_{k=1,\ldots,\lfloor
         \varepsilon n\rfloor}   (Z_{k}-Z_{k+1})> b_{dn}\}}\right]\\
&\le &\Vert Z_{n+\lfloor \varepsilon n\rfloor}^*-Z_{n+\lfloor \varepsilon n\rfloor}\Vert_{\beta}\left( \mathbb E[R_{\lfloor \varepsilon n\rfloor}] \mathbb E[\max(0,-\xi_0)^{\beta'}] \,  b_{dn}^{-\beta'}\right)^{1-\frac 1\beta}\, .
\end{eqnarray*}
Due to Proposition \ref{LEM0},
$
 \left\Vert Z_{m}^*-Z_{m}\right\Vert_{\beta}=  O(a_m).
$
%Moreover 
%\begin{equation}\label{EQ2}
%\Vert R_n\Vert_p=O(\mathbb E[R_n]).
%\end{equation}
%Indeed,
%\begin{eqnarray*}
%\mathbb E\Big[(R_n/\mathbb E[R_n])^p\Big]&=&\int_{0}^{+\infty}
%       \mathbb P\Big((R_n/\mathbb E[R_n])^p>x\Big)\, dx\\
%&=&\int_{0}^{+\infty} \mathbb P\Big(R_n>\mathbb E[R_n] x^{1/p}\Big)\, dx
%=\int_{0}^{+\infty} O\Big(e^{-C(p)x^{1/p}}\Big)\, dx<\infty,
%\end{eqnarray*}
%for some $C(p)>0$ due to \cite[Lemma 34]{TLL}.
Now we choose $b_n=\mathbb E[R_{\lfloor \varepsilon n\rfloor}]^{\frac 1{\beta'}}$ and $d$ large enough such that
$$\limsup_{n\rightarrow +\infty} (\mathbb E[Z_n^*])^{-1}\left\Vert Z_{n+\lfloor \varepsilon n\rfloor}^*-Z_{n+\lfloor \varepsilon n\rfloor}\right\Vert_{\beta}
      \left(\mathbb E[ R_{\lfloor \varepsilon n\rfloor} ]
         \mathbb E\Big[\max(0,-\xi_0)^{\beta'}\Big]
       b_{dn}^{-\beta'}\right)^{1-\frac 1\beta}<\lim_{n\rightarrow +\infty}\frac{\mathbb E[Z_{n+\lfloor \varepsilon n\rfloor}^*-Z^*_{n}]}{\mathbb E[Z_n^*]},$$
and so
\eqref{Hypcompliquee} is satisfied, which implies, due to Remark
\ref{Condcompl}, that $\liminf_{n\rightarrow +\infty}
      \frac {n b_{dn}}{\mathbb E[Z_n^*]}  \mathbb P\Big(Z_n^*< 0\Big)>0$.
But $\mathbb{E}[R_n]\sim c (\frac{n}{a_n})^{2}$.
Indeed there exists a $c_1>0$
such that $\mathbb E[R_n]\sim c_1 n$ if $S$ is transient
(\cite[p. 36]{Spitzer}),
$\mathbb E[R_n]\sim c_1 n/\log n$ if $S$ is recurrent and $\alpha=d$ and
$\mathbb E[R_n]\sim c_1 n^{\frac 1\alpha}$ if 
$S$ is recurrent and $\alpha>d=1$
(see \cite[p. 698, 703]{LGR}).
 Hence $b_n=O( (\frac{n}{a_n})^{\frac{2}{\beta'}} )$ and so $\liminf_{n\rightarrow +\infty}\left(               \frac n{a_n}\right)^{1+\frac{2}{\beta'}}
\mathbb P\Big(Z_n^* \leq m \Big)>0$.
Since $\mathbb P\Big(Z_n^* \leq m \Big)$ is decreasing in $n$, we conclude that $\mathbb P\Big(Z_n^* \leq m \Big)>0$ for every $n$ and we obtain the lower bound for every $n$.
\end{proof}

We are now interested in the case when the random variables $\xi$ are $\mathbb Z$-valued. Better estimates can be obtained 
for $\mathbb P(T_0>n)$
in this context.
When $Z_1$ takes its values in $\mathbb Z$, we define 
the range ${\mathcal R}_n$ of the RWRS $Z$, i.e.
the number of sites visited by $Z$ before up to time $n$, by 
$${\mathcal R}_n:=\#\{ Z_1,\dots,  Z_{n}\}.$$
\begin{prop}\label{cvpsrange4}
Assume that $\mathbb P(\xi_1\in\mathbb Z)=1$. If $S$ is recurrent, then 
\begin{equation}\label{EEE2a}
0<\liminf_{n\rightarrow +\infty}\frac{\mathbb E[\mathcal R_n]}{a_n}\leq \limsup_{n\rightarrow +\infty}\frac{\mathbb E[\mathcal R_n]}{a_n}<\infty\, ,
\end{equation}
\begin{equation}\label{EEE3a}
0<\liminf_{n\rightarrow +\infty}\frac n{a_n}\mathbb P(T_0>n)\leq \limsup_{n\rightarrow +\infty}\frac n{a_n}\mathbb P(T_0>n)<\infty\, .
\end{equation}
\end{prop}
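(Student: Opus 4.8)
The plan is to reduce \eqref{EEE3a} to \eqref{EEE2a} by invoking Theorem~\ref{THMZ}, and then to establish \eqref{EEE2a} by bracketing $\mathbb E[\mathcal R_n]$ between a constant times $a_n$ from above and below. For the bound $\mathbb E[\mathcal R_n] = O(a_n)$, recall that since $\xi$ is $\mathbb Z$-valued, $\mathcal R_n = \#\{Z_0,\ldots,Z_n\} \le \max_{k\le n} Z_k - \min_{k\le n} Z_k + 1$, so it suffices to show $\mathbb E[\max_{k\le n} Z_k - \min_{k\le n} Z_k] = O(a_n)$; this is exactly the content established in the proof of Proposition~\ref{LEM0} (the $L^{\beta'}$-bound $\Vert \max_{k\le n} Z_k\Vert_{\beta'} = O(a_n)$, applied also to $-Z$, which has the same law by symmetry of the argument, gives the claim a fortiori). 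Hence $\limsup_n \mathbb E[\mathcal R_n]/a_n < \infty$, and by Lemma~\ref{THMlem} (with $A := \limsup_n a_n^{-1}\mathbb E[\max_k Z_k - \min_k Z_k]$) together with the identity $\mathbb P(T_0>n) = \mathbb P(\min_{\ell\le n}|Z_\ell|\ge 1)$ valid in the $\mathbb Z$-valued case, we get $\limsup_n \frac{n}{a_n}\mathbb P(T_0>n) < \infty$.

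The main work is the lower bound $\liminf_n \mathbb E[\mathcal R_n]/a_n > 0$. Here I would again use the representation \eqref{eqn:alforg}, $\mathbb E[\mathcal R_n] = \sum_{k=0}^n \mathbb P(T_0>k)$, so that a lower bound on $\mathbb E[\mathcal R_n]$ follows from a lower bound on $\mathbb P(T_0 > k)$ for $k$ of order $n$. Since $\{T_0 > n\} \supseteq \{\max_{k\le n} Z_k \le -1\} \cup \{\min_{k\le n} Z_k \ge 1\}$ and, by the symmetry $Z \deq -Z$ (the increments $\xi_{S_k}$ are symmetric once we symmetrize — or more simply, use that $\mathbb P(T_0>n) \ge \mathbb P(\max_{k\le n} Z_k < 0)$), it is enough to lower-bound $\mathbb P(\max_{k\le n} Z_k < 0)$. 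This is precisely the lower bound already proved in Proposition~\ref{lowerRWRS}: when $\xi$ has a moment of order $\beta > 1$, one has $\liminf_n \frac{n b_n}{a_n}\mathbb P(\max_{k\le n} Z_k < 0) > 0$ with the choice $b_n = O(\mathbb E[R_n]^{1/\beta})$ made there, and since $a_n \sim c\, n\, \mathbb E[R_n]^{1/\beta - 1}$ this reads $\liminf_n \frac{1}{\mathbb E[R_n]}\mathbb P(\max_{k\le n} Z_k < 0) \cdot (\text{something}) > 0$; tracking the exponents, one obtains that $\mathbb P(T_0>n)$ is bounded below by a constant times $a_n/n$. Summing over $k \in [n/2, n]$ and using monotonicity of $k \mapsto \mathbb P(T_0>k)$ together with regular variation of $a_n$ then yields $\mathbb E[\mathcal R_n] \ge \sum_{k=n/2}^n \mathbb P(T_0>k) \ge \frac{n}{2}\mathbb P(T_0>n) \gtrsim a_n$, i.e.\ $\liminf_n \mathbb E[\mathcal R_n]/a_n > 0$.

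Once \eqref{EEE2a} is in hand, \eqref{EEE3a} follows immediately from \eqref{gammaa1} of Theorem~\ref{THMZ}, whose hypotheses — stationary increments, $Z_0 = 0$, $\mathbb P(Z_1 \in \mathbb Z) = 1$, $(a_n)$ regularly varying with exponent $\gamma \in (0,1)$ (which holds by \eqref{eqn:a_nforrwrs} in each of the three regimes, with $\gamma = 1 - \frac1\alpha + \frac1{\alpha\beta}$, $\gamma = \frac1\beta$, or $\gamma = \frac1\beta$ respectively), and $0 < C_- \le C_+ < \infty$ — are now all verified. The main obstacle I anticipate is bookkeeping: making the chain $b_n = O(\mathbb E[R_n]^{1/\beta})$, $a_n \sim c\,n\,\mathbb E[R_n]^{1/\beta-1}$, and the conclusion of Proposition~\ref{lowerRWRS} fit together cleanly enough to extract $\mathbb P(T_0>n) \gtrsim a_n/n$, and checking that the various asymptotics for $\mathbb E[R_n]$ ($\sim cn$ when transient, $\sim cn/\log n$ when $\alpha = d$, $\sim cn$ when $\alpha > d = 1$) are consistent with the stated $a_n$; the heavy analytic lifting has, however, already been done in Propositions~\ref{LEM0} and~\ref{lowerRWRS}.
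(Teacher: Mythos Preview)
Your upper bound and the reduction of \eqref{EEE3a} to \eqref{EEE2a} via Theorem~\ref{THMZ} are fine and match the paper. The gap is in the lower bound on $\mathbb E[\mathcal R_n]$: Proposition~\ref{lowerRWRS} does \emph{not} deliver $\mathbb P(T_0>n)\gtrsim a_n/n$, so your summation argument cannot close. Concretely, the proof of Proposition~\ref{lowerRWRS} yields $\mathbb P(\max_{k\le n}Z_k<0)\gtrsim a_n/(n b_n)$ with $b_n\asymp \mathbb E[R_n]^{1/\beta}$, and since $a_n\sim c\,n\,\mathbb E[R_n]^{1/\beta-1}$ this gives only $\mathbb P(\max_{k\le n}Z_k<0)\gtrsim \mathbb E[R_n]^{-1}$, whereas $a_n/n\asymp \mathbb E[R_n]^{1/\beta-1}$. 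Because $\beta>1$ and $\mathbb E[R_n]\to\infty$, one has $\mathbb E[R_n]^{-1}=o(\mathbb E[R_n]^{1/\beta-1})$, so the bound is strictly weaker. For a sanity check, take $d=1$, $\alpha\in(1,2]$, $\beta=2$: then $a_n/n=n^{-1/(2\alpha)}$ but Proposition~\ref{lowerRWRS} only gives $n^{-1/\alpha}$. Summing, $\sum_{k\le n}\mathbb E[R_k]^{-1}\asymp n^{1-1/\alpha}=o(a_n)$, so the inequality $\mathbb E[\mathcal R_n]\gtrsim a_n$ does not follow.

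The paper bypasses $\mathbb P(T_0>n)$ altogether for the lower bound on $\mathbb E[\mathcal R_n]$ and argues directly via a Cauchy--Schwarz/Jensen trick: writing $\mathcal V_n:=\sum_{i,j=1}^n\mathbf 1_{\{Z_i=Z_j\}}$ for the self-intersection count of $Z$, one has $n^2\le \mathcal R_n\mathcal V_n$, hence $\mathbb E[\mathcal R_n]\ge n^2\,\mathbb E[\mathcal V_n^{-1}]\ge n^2/\mathbb E[\mathcal V_n]$. The local limit theorem for the RWRS (from \cite{TLL,FFN}) then gives $\mathbb E[\mathcal V_n]=n+2\sum_{i<j}\mathbb P(Z_{j-i}=0)\sim C' n^2/a_n$, and so $\mathbb E[\mathcal R_n]\gtrsim a_n$. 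This is the missing idea in your proposal; once it is in place, Theorem~\ref{THMZ} finishes the job exactly as you describe.
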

\begin{proof}
Since $\mathcal R_n\le\max_{k=1,\ldots,n}Z_k-\min_{k=1,\ldots,n}Z_k+1$, we already know from Proposition \ref{LEM0} that $\limsup_{n\rightarrow +\infty}\mathbb E[\mathcal R_n]/a_n<\infty$. 
Let us prove that $\liminf_{n\rightarrow +\infty}
\mathbb E[\mathcal R_n]/a_n>0$.
Let $\mathcal N_n(x):=\#\{k=1,\ldots,n\,:\,Z_k=x\}$.
Applying the Cauchy-Schwarz inequality to 
$n= \sum_x  \mathcal N_n(x) \mathbf 1_{\{\mathcal N_n(x)>0\}} $, we obtain
$$n^2\le \sum_y \mathbf 1_{\{\mathcal N_n(y)>0\}}\, \sum_x(\mathcal N_n(x))^2 =\mathcal R_n\, \mathcal V_n,$$
with $\mathcal V_n=\sum_{x}(\mathcal N_n(x))^2
         =\sum_{i,j=1}^n\mathbf 1_{\{Z_i=Z_j\}}$
the number of self-intersections of $Z$ up to time $n$
and so using Jensen's inequality,
$\frac{ \mathbb E[\mathcal R_n]}{a_n} \ge \frac{n^2}{a_n}  \mathbb E[(\mathcal V_n)^{-1}]\ge \frac{n^2}{a_n}{\mathbb E[\mathcal V_n]^{-1}}$.
Moreover, using the local limit theorems for the RWRS proved in \cite[Theorem 1]{TLL} and in \cite[Theorem 3]{FFN},
\begin{equation}\label{EVn}
\mathbb E[\mathcal V_n]=
    n+ 2 \sum_{1\leq i <j \leq n} \mathbb P(Z_{j-i}=0)  \sim    C' \frac{n^2}{a_n},
\end{equation}
so $\liminf_{n\rightarrow +\infty} \frac{\mathbb E[\mathcal R_n]}{a_n }\ge \frac 1{C'} >0$. This gives \eqref{EEE2a}. 
Finally \eqref{EEE3a} follows from Theorem \ref{THMZ}.

\end{proof}

In the particular case where $\mathbb P(\xi_0  \in\{-1,0,1\})=1$, we obtain a precise estimate (as a consequence of the two last items of Theorem \ref{THMZ}).
%\begin{thm}\label{RWRSTHM-101}
%If $\mathbb P(\xi_0\in\{-1,0,1\})=1$, then
%\[\mathbb P(\max_{k=1,\ldots,n}Z_k\le -1)=\mathbb P(T_0>n,Z_1<0)
%=\frac 12 \mathbb P(T_0>n)\sim \frac {a_n} {2n}\max\left(2-\frac 1{\alpha}, 1\right) \mathbb E\left[\sup_{t\in[0,1]}\Delta_t\right].
%\]
%\end{thm}
\begin{thm}\label{cvpsrange3}
Assume that $\mathbb P(\xi_1\in\{-1,0,1\})=1$,
then
\begin{equation}\label{EEE1}
\frac{\mathcal R_n}{a_n} \stackrel{\mathcal L}{\longrightarrow} \sup_{t\in[0,1]}\Delta_t-\inf_{t\in[0,1]}\Delta_t\quad\mbox{and}\quad
\lim_{n\rightarrow+\infty} \frac{\EE[\mathcal R_n]}{a_n}=2\, \mathbb E\left[\sup_{t\in[0,1]}\Delta_t\right]   \, ,
\end{equation}
\begin{equation}\label{EEE3}
\mathbb P(Z_n^*\le -1)
=\frac 12 \mathbb P(T_0>n)\sim \frac {\gamma a_n} {n}\mathbb E\left[\sup_{t\in[0,1]}\Delta_t\right],
\end{equation}
where $\gamma=1-\frac{1}{2\alpha}$ when $d=1<\alpha$ and $\gamma=1/2$ otherwise. \end{thm}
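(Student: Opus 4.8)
The plan is to read off Theorem~\ref{cvpsrange3} from Theorem~\ref{THMZ}, the only genuinely new work being the identification of the limiting constant. The starting observation is that, since $\mathbb P(\xi_1\in\{-1,0,1\})=1$, the increments $Z_k-Z_{k-1}=\xi_{S_k}$ all lie in $\{-1,0,1\}$, so the integer-valued path $(Z_0,\dots,Z_n)$ never skips a value; hence $\{Z_0,\dots,Z_n\}$ is the whole integer interval between its running minimum and maximum, and therefore
\[
\mathcal R_n=\max_{0\le k\le n}Z_k-\min_{0\le k\le n}Z_k+1 .
\]
Dividing by $a_n\to+\infty$, the ``$+1$'' becomes negligible and the convergence in law in \eqref{EEE1} follows immediately from the distributional convergence of $(\max-\min)/a_n$ towards $\sup_{[0,1]}\Delta-\inf_{[0,1]}\Delta$ supplied by Proposition~\ref{LEM0} (applicable since here $\beta=2>1$).

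For the convergence of the expectation I would exploit that a centered $\{-1,0,1\}$-valued random variable is automatically symmetric, so $\xi\deq-\xi$ as fields and, $S$ being independent of $\xi$, $Z\deq-Z$ as processes; in particular $\mathbb E[\min_{0\le k\le n}Z_k]=-\mathbb E[\max_{0\le k\le n}Z_k]$ for every $n$. Combined with the second assertion of Proposition~\ref{LEM0}, namely $\mathbb E[\max_{0\le k\le n}Z_k]/a_n\to\mathbb E[\sup_{t\in[0,1]}\Delta_t]$, this gives
\[
\frac{\mathbb E[\mathcal R_n]}{a_n}=\frac{2\,\mathbb E[\max_{0\le k\le n}Z_k]}{a_n}+\frac1{a_n}\longrightarrow C:=2\,\mathbb E\Big[\sup_{t\in[0,1]}\Delta_t\Big],
\]
with $C<+\infty$ by Proposition~\ref{LEM0} and $C>0$ by non-degeneracy of $\Delta$. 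This completes \eqref{EEE1}.

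At this point Theorem~\ref{THMZ} applies: $(Z_n)$ is centered with stationary increments and $Z_0=0$, $\mathbb P(Z_1\in\mathbb Z)=1$, the sequence $(a_n)$ of \eqref{eqn:a_nforrwrs} is regularly varying of some index $\gamma\in(0,1)$, the hypothesis of Lemma~\ref{THMlem} holds with $A=C$ by the computation above, and $\mathbb E[\mathcal R_n]/a_n\to C\in(0,\infty)$. Hence \eqref{gammaa2} gives $\tfrac n{a_n}\mathbb P(T_0>n)\to\gamma C$, and since moreover $\mathbb P(Z_1\in\{-1,0,1\})=1$, relation \eqref{EEE3ba} yields
\[
\mathbb P\Big(\max_{k=1,\dots,n}Z_k\le-1\Big)=\tfrac12\mathbb P(T_0>n)\sim\frac{\gamma\,a_n}{2n}\,C=\gamma\,\frac{a_n}{n}\,\mathbb E\Big[\sup_{t\in[0,1]}\Delta_t\Big].
\]
It remains only to read off $\gamma$ from \eqref{eqn:a_nforrwrs} with $\beta=2$: $\gamma=1-\tfrac1{2\alpha}$ when $d=1<\alpha\le2$ (then $\gamma>\tfrac12$), and $\gamma=\tfrac12$ when $\alpha=d$ or when $S$ is transient, so that in each case $\gamma=\max(1-\tfrac1{2\alpha},\tfrac12)$, which is precisely \eqref{EEE3}.

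The load-bearing step is Proposition~\ref{LEM0}, which is already established (it is itself the delicate part, relying on conditional positive association and Gong's inequality to get uniform integrability of $(\max_{0\le k\le n}Z_k)/a_n$). Beyond it, the only point requiring a little care is upgrading the convergence in law of $\mathcal R_n/a_n$ to convergence of expectations with the exact constant $2\,\mathbb E[\sup_{[0,1]}\Delta]$, which rests simultaneously on that uniform integrability and on the elementary symmetry $Z\deq-Z$; I do not anticipate any genuine obstacle, the argument being essentially bookkeeping on top of Theorem~\ref{THMZ}.
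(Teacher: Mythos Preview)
Your proposal is correct and follows the same approach as the paper's proof: write $\mathcal R_n=\max-\min+1$ from the $\{-1,0,1\}$ increments, invoke Proposition~\ref{LEM0} for the limits, then apply Theorem~\ref{THMZ} with the regular variation index $\gamma$ read off from $(a_n)$. Your explicit use of the symmetry $Z\deq -Z$ (forced by $\xi_0$ being centered and $\{-1,0,1\}$-valued) to turn $\mathbb E[\max-\min]$ into $2\,\mathbb E[\max]$ is exactly the step the paper leaves implicit when it claims the convergence of $\mathbb E[\mathcal R_n]/a_n$ ``due to Proposition~\ref{LEM0}''.
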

\begin{proof}
Since $\mathbb P(\xi_1\in\{-1,0,1\})=1$, 
$\mathcal R_n=Z_n^*-(-Z)_n^*+1$.
We deduce from Proposition \ref{LEM0} the convergence of $(\mathbb E[\mathcal R_n]/a_n)_ {n\ge 0}$. The convergence in distribution
of $(\mathcal R_n/a_n)_ {n\ge 0}$ comes from the convergence in distribution of $(Z_{\lfloor n\cdot\rfloor}/a_n)_n$ for the $J_1$-metric.
The last part of Theorem \ref{cvpsrange3} follows from \eqref{EEE1} and from the last item of Theorem \ref{THMZ}
since $(a_n)_ {n\ge 0}$ is regularly varying with exponent $\gamma$ where $\gamma=1-\frac{1}{2\alpha}$ when $S$ is recurrent and 
$d=1<\alpha$ and $\gamma=1/2$ otherwise, and since $Z$ has the same distribution as $-Z$. %This equals $\gamma=\max\left(1-\frac 1{2\alpha}, \frac 12\right)$.
\end{proof}

\begin{rqe}
It is worth noticing that the techniques we used in this section can be applied to more general RWRS, for instance to RWRS
studied in \cite{Wang}. 
\end{rqe}
\section{Random process in Brownian scenery}\label{RPBS}
Let us consider generalizations of the Kesten-Spitzer process $(\Delta_t)_{t\geq 0}$ introduced in the previous section. 
Let $W=\acc{W(x); x\in\mathbb{R}}$ 
be a two-sided real Brownian motion
and $Y=\acc{Y(t); t\ge 0}$ be a real-valued self-similar process of index $\gamma \in (0, 2)$ with stationary increments. 
 We assume that there exists a continuous version $\acc{L_t(x); x\in\mathbb{R},t\geq 0}$ of the local time of $Y$. 
The processes $W$ and $Y$ are defined on the same probability space and are assumed to be independent.
We consider {\it the random process in Brownian scenery} $\acc{\Delta_t;t\geq 0}$ defined as
\[\Delta_t = \int_{\mathbb{R}} L_t(x) \, dW(x).\]
The process $\Delta$ is itself a self-similar process of index $h$ with stationary increments, with
\[h:=  1-\frac{\gamma}{2}.\] 
Let $V_1:=\int L_1^2(x) \, dx$ be the self-intersection local time of $Y$.
Given $Y$, $\Delta_1$ has centered Gaussian distribution
with variance $V_1$.
The following assumption is made on the random variable $V_1$.
\begin{description}
\item[(H1)]  There exist a real number $\alpha > 1$, and positive constants $C,c$ such that for any $x \ge 0$,
$$\PP\Big(V_1 \ge x\Big) \le C \exp(-cx^{\alpha}) .$$
%\item[(H2)]  There exist a real number $\beta > 0$, and positive constants $C,c$ such that for any $x > 0$,
%$$\PP\cro{V_1 \le x} \le C \exp(-cx^{-\beta}) \, .$$
\end{description}
In \cite{castellguillotinwatbled}, examples of processes $Y$ satisfying the above assumptions are given: stable L\'evy process with index $\delta \in (1,2]$ (it satisfies (H1) with $\alpha=\delta$, see Lemma 2.2 in \cite{castellguillotinwatbled}), 
the fractional Brownian motion with index $H\in (0,1)$ (it satisfies (H1) with $\alpha=\frac{1}{H}$, see Lemma 2.3 in \cite{castellguillotinwatbled}) and the iterated Brownian motion which satisfies assumption (H1) with $\alpha=\frac{4}{3}$ (see
 Lemma 2.4 in \cite{castellguillotinwatbled}).

Our main result of this section is the following one.
\begin{thm}\label{theoMk2}
Let $a>0$. Assume (H1) and that there exists a $c_a>0$
such that, for every $n\geq 1$,
\begin{equation}\label{Quasi-ass}
\mathbb P\left(\Delta_1\le -2a,\ \max_{k=1,...,n}(\Delta_{k+1}-\Delta_1)\le a\right)
\ge c_a \mathbb P\left(\max_{k=1,...,n}(\Delta_{k+1}-\Delta_1)\le a\right)\, .
\end{equation}
Then, there exists a constant $c>0$,  such that for large enough $T$ , 
\begin{equation}\label{eqMk}
 c^{-1}\ T^{-\frac{\gamma}{2}} (\ln T)^{-\frac{1}{2-\gamma}(1+\frac{1}{\alpha}) }\leq \PP\Big( \sup_{t\in[0,T]} \Delta_t\leq a\Big)\leq c T^{-\frac{\gamma}{2}}.
\end{equation}
\end{thm}
Note that $\PP\Big( \sup_{t\in[0,T]} \Delta_t< 0\Big)=0$.

Let us notice that $\Delta$ satisfies \eqref{Quasi-ass}
in the particular case where $Y$ is a stable L\'evy process (see Proposition 3.1 in \cite{KL}).
Before proving Theorem \ref{theoMk2}, let us state a preliminary result.
Up to enlarging the probability space, we consider
a process $\widetilde Y$, independent of $W$, such that, for every $T>0$,
$(\widetilde Y_t)_{t\in[0,T]}$ has the same distribution as
$(Y_{T-t}-Y_{T})_{t\in[0,T]}$ (this is possible, due to the Kolmogorov theorem, since, for every $0<T<T'$, $(Y_{T-t}-Y_{T})_{t\in[0,T]}$ and $(Y_{T'-t}-Y_{T'}=Y_{T-t+(T'-T)}-Y_{T+(T'-T)})_{t\in[0,T]}$ have the same distribution since the increments of $Y$ are stationary).
\begin{lem}
For every $T>0$, 
The process $(\Delta_{T-t}-\Delta_{T})_{t\in[0,T]}$ has the same distribution as $\left(\widetilde\Delta_t:=\int_{\mathbb R}\widetilde{L}_t(x)\, dW(x)\right)_{t\in[0,T]}$ with $\tilde L$ the local time of
$\tilde Y$.\\
Moreover $\widetilde V_1:=\int_{\mathbb R}\widetilde L^2_1(x)\, dx$ has the same distribution as $V_1$.
\end{lem}
\begin{proof}Observe first that
\begin{eqnarray*}
\int_{[0,t]}f(Y_{T-s}-Y_{T})\, ds &=& \int_{[T-t,T]}f(Y_u-Y_T)\, du\\
 &=& \int_{[0,T]}f(Y_u-Y_T)\, du-\int_{[0,T-t]}f(Y_u-Y_T)\, du\\
&=&\int_{\mathbb R} f(x-Y_T)\, L_T(x)\, dx- \int_{\mathbb R} f(x-Y_T)\, L_{T-t}(x)\, dx\\
&=&\int_{\mathbb R} f(y)\, L_T(Y_T+y)\, dy- \int_{\mathbb R} f(y)\, L_{T-t}(Y_T+y)\, dy\\
&=&\int_{\mathbb R} f(y)\, \left(L_T(Y_T+y)-L_{T-t}(Y_T+y)\right)\, dy\, .
\end{eqnarray*}
Therefore the local time $L^{(T)}$ of $(Y_{T-t}-Y_{T})_{t\in[0,T]}$ is given by $L^{(T)}_t(x)=L_T(Y_T+x)-L_{T-t}(Y_T+x)$ and so
\begin{eqnarray*}
\Delta_{T-t}-\Delta_{T}&=&\int_{\mathbb R}(L_{T-t}(x)-L_{T}(x))\, dW(x)\\
&=&-\int_{\mathbb R}L^{(T)}_t(x-Y_T)\, dW(x)\\
&=&\int_{\mathbb R}L^{(T)}_t(y)\, dW'(y)\, ,
\end{eqnarray*}
with $W'(y):=W(Y_T)-W(Y_T+y)$.
But $(L^{(T)}_t(x),W'(x))_{t\in[0,T],\ x\in\mathbb R}$ has the same distribution as $(\widetilde L_t(x), W(x))_{t\in[0,T],\ x\in\mathbb R}$.
This ends the proof of the first point.\\
For the second point, we observe that $\widetilde V_1:=\int_{\mathbb R}\widetilde L^2_1(x)\, dx$ has the same distribution as
\begin{eqnarray*}
\int_{\mathbb R}(L^{(1)}_1(x))^2\, dx
     &=& \mathbb E\left[\int_{\mathbb R}(L_1(Y_1+x)-L_0(Y_1+x))^2\, dx\right]\\
&=& \int_{\mathbb R}(L_1(y))^2\, dy =V_1\, .
\end{eqnarray*}
\end{proof}
\begin{proof}[Proof of Theorem \ref{theoMk2}]
Up to multiplying $W$ by a positive constant, we assume that $a=1$. 
%Conditionally to the process $Y$,  $(\Delta_t)_{t\geq 0}$ is a centered Gaussian process with positively associated increments.
%Indeed $\mathbb E[\Delta_t\Delta_s|Y]=\int_{\mathbb R}L_t(x)L_s(x)\, dx\ge 0$.
Due to \eqref{Quasi-ass},
\begin{eqnarray}\label{qa}
\mathbb P\left(\max_{k=1,...,n+1}\Delta_k\le -1\right)
&\ge&\mathbb P\left(\Delta_1\le -2,\ \max_{k=1,...,n}(\Delta_{k+1}-\Delta_1)\le 1\right)\nonumber\\
&\ge& c_a\, \mathbb P\left(\Delta_{n}^*\le 1\right)\, .
\end{eqnarray}
Observe first that inequalities (3.6) and (3.2) in \cite{castellguillotinwatbled} ensure the existence of two positive constants $\tilde a$ and $\tilde C$ such that
\begin{equation}\label{TAILDECAY1}
\forall x>0,\quad
\mathbb P\left(\sup_{[0,1]} \Delta>x\right)\le 2\, 
\mathbb P\left(\Delta_1 >x\right) \le 2\, \tilde C \, e^{-\tilde a x^{\frac{2 \alpha}{1+\alpha}}  }\, 
\end{equation}
and
\begin{equation}\label{TAILDECAY2}
\forall x>0,\quad
\mathbb P\left(\sup_{[0,1]} (-\Delta)>x\right)\le 2\, 
\mathbb P\left(-\Delta_1 >x\right) \le 2\, \tilde C \, e^{-\tilde a x^{\frac{2 \alpha}{1+\alpha}}  }\, .
\end{equation}
Therefore, since $\sup_{[0,1]}\Delta\ge 0$, we have
\begin{eqnarray}
\mathbb E[\sup_{[0,1]}| \Delta| ]&=&\int_{(0,+\infty)}\mathbb P
   \left(\sup_{[0,1]} |\Delta|>x\right)\, dx\nonumber\\
&\le&\int_{(0,+\infty)}\left(\mathbb P
   \left(\sup_{[0,1]} \Delta>x\right)+\mathbb P
   \left(\sup_{[0,1]} (-\Delta)>x\right)\right)\, dx\nonumber\\
   &\le& 4\, \tilde C \int_{(0,+\infty)}\exp\left( -\tilde a\, x^{\frac{2 \alpha}{1+\alpha}}\right)\, dx<\infty\, .\label{chgtlevel}
\end{eqnarray}
Hence, since $\Delta$ is almost surely continuous (and therefore uniformly continuous) on $[0,1]$, $\max_{k=1,...,n}\Delta_{\frac k n}$ converges almost surely to $\sup_{[0,1]}\Delta$, and so, with the use of the dominated convergence theorem, we conclude that
\begin{equation}\label{equivEspSup}
\mathbb E[\max_{k=1,...,n}\Delta_k]=n^h\mathbb E[\max_{k=1,...,n}\Delta_{\frac k n}]\sim n^h\mathbb E[\sup_{[0,1]}\Delta]\, .
\end{equation}
Therefore, the discrete-time process
$(\Delta_n)_{n\ge 0}$ (with $\Delta_0=0$) satisfies the assumptions of Theorem \ref{THMlem} and so, due to \eqref{qa},
we deduce the upper bound
$$ \PP\Big( \sup_{t\in[0,T]} \Delta_t\leq 1\Big)\leq \PP\Big( \max_{k=1,\ldots,\lfloor T\rfloor} \Delta_k  \leq 1\Big)
\le \frac{\mathbb P(\max_{k=1,...,\lfloor T\rfloor +1}\Delta_k\le -1)}{c_a}
=\onotation( T^{-\frac{\gamma}{2}} ).$$
Moreover, \eqref{equivEspSup} ensures that \eqref{eqn:notregvar} is also satisfied and the last inequality in \eqref{TAILDECAY2} ensures that
\eqref{eqn:hypbn} is satisfied with $b_n= c_0\ (\log n)^{(1+\alpha)/2\alpha}$ for a suitable $c_0$.
By reasoning as above, we deduce that
$$
\mathbb E[\max_{k=1,...,n}|\Delta_k\vert^2]
= n^{2h}   \mathbb E[\max_{k=1,...,n}|\Delta_{\frac k n}|^2]\\
\sim  n^{2h}  \mathbb E[\sup_{[0,1]}|\Delta|^2]\, .
$$
Therefore, $\vert\vert \Delta_n^{*}-\Delta_n\vert\vert_2=\mathcal O(n^h)$
and $(\Delta_n)_{n\ge 0}$ satisfies the assumptions of Item (ii) of Theorem \ref{thm:lower} with $b_n= c_0\ (\log n)^{(1+\alpha)/2\alpha}$ for a suitable $c_0$.
Then,
\begin{equation}\label{lowerDelta}
\liminf_{n\rightarrow +\infty} n^{\frac{\gamma}{2}} (\log n)^{\frac{1+\alpha}{2\alpha}} \PP\Big( \max_{k=1,\ldots,n} \Delta_k  < 0\Big) >0.
\end{equation}
Let us prove the lower bound in Theorem \ref{theoMk2}. Proceeding as in the proof of Theorem \ref{fBM} 
we set $M_n:=\lfloor(C \log n)^{\frac{1+\alpha}{2\alpha h}}\rfloor$ for some $C>0$, $n_k:=k/M_n$ and 
$A_k:=\max_{s\in [n_k,n_{k+1}]}\left( \Delta_s - \Delta_{n_{k+1}}\right)$. Observe that
$$\max_{t\in[0,n]}\Delta_t-
\max_{k=1,\ldots,n M_n}\Delta_{k/M_n}
      \le \max_{k=0,\ldots,n M_n-1}A_k,$$
Then, using the stationarity and the self-similarity of $\Delta$, 
\begin{eqnarray*}
&\ &\mathbb P\left(\max_{k=0,\ldots,n M_n-1}A_k>a\right)\le n\, M_n\mathbb P\left(\max_{t\in [0,1]}(\Delta_{tn_1}-\Delta_{n_1})>a \right)\\
&\le& n(C \log n)^{\frac{1+\alpha}{2\alpha h}}\mathbb P\left(\max_{t\in [0,1]}(\Delta_{t}-\Delta_{1})>a(M_n)^h\right)\\
&\le& n(C \log n)^{\frac{1+\alpha}{2\alpha h}}\mathbb P\left(\max_{[0,1]}\widetilde\Delta >a(M_n)^{h}\right)\\
&\le& n(C \log n)^{\frac{1+\alpha}{2\alpha h}}2\, \mathbb P\left(\widetilde\Delta_1 >a(M_n)^{h}\right)\\
&\le& O\left(n\, (\log n)^{\frac{1+\alpha}{2\alpha h}} 
e^{-\tilde a_0 M_n^{\frac{2 \alpha h}{1+\alpha}}  }\right)\\
&\le& O\left(n\, (\log n)^{\frac{1+\alpha}{2\alpha h}} 
e^{-\tilde a_0 C\log n}  \right)\, ,
\end{eqnarray*}
for some $\tilde a_0>0$ (applying (3.6) and (3.2) of \cite{castellguillotinwatbled} to the process $\widetilde\Delta$).
We choose $C=C(a)$ large enough so that this last quantity is in $o(n^{-\frac\gamma 2}(\log n)^{-\frac {1+\alpha}{2\alpha h}})$. So,
using the self-similarity of $\Delta$, we obtain
\begin{eqnarray*}
&& \mathbb P\left(\max_{t\in[0,n]}\Delta_t\le a\right)\\
&\ge& \mathbb P\left(\max_{k=1,\ldots,nM_n}\Delta_{k/M_n}\le  0\right)-o(n^{-\frac\gamma 2}(\log n)^{-\frac {1+\alpha}{2\alpha h}})\\
&\ge& \mathbb P\left((M_n)^{-h}
        \max_{k=1,\ldots,nM_n}\Delta_k\le 0\right)-o(n^{-\frac\gamma 2}(\log n)^{-\frac {1+\alpha}{2\alpha h}})\\
&\ge&\mathbb P\left(\max_{k=1,\ldots,nM_n}\Delta_k\le 0\right)-o(n^{-\frac\gamma 2}(\log n)^{-\frac {1+\alpha}{2\alpha h}})\\
&\ge& {c'}(nM_n)^{-\frac \gamma 2}(\log n)^{-\frac{1+\alpha}{2\alpha}}-o(n^{-\frac\gamma 2}(\log n)^{-\frac {1+\alpha}{2\alpha h}})\, ,
\end{eqnarray*}
for some $c'>0$ due to \eqref{lowerDelta}.
We conclude by noticing that
$$(nM_n)^{-\frac \gamma 2}(\log n)^{-\frac{1+\alpha}{2\alpha}}
\sim C^{-\frac{(1+\alpha)\gamma}{4\alpha h}}
n^{-\frac\gamma 2}(\log n)^{-\frac{1+\alpha}{2\alpha}\left(1+\frac{\gamma}{2h}\right)}\, .$$
This ends the proof of the lower bound of \eqref{eqMk} since $2h=2-\gamma$ and so $\frac{1+\alpha}{2\alpha}(1+\frac\gamma{2h})=\frac 12\left(1+\frac 1\alpha\right)\frac 2{2-\gamma}=\frac 1{2-\gamma}\left(1+\frac 1\alpha\right)$.
\end{proof}
The following result improves \cite{BFFN,castellguillotinwatbled}.
\begin{coro}[Persistence probability for Random process in Brownian scenery]\label{thm:RPBS}
If $Y$ is a stable L\'evy process with index $\alpha\in(1,2]$, then there exists a $c>0$ such that
\[
c^{-1}\, T^{-\frac 1{2\alpha}}(\log T)^{-\frac{(1+\alpha)}{(2\alpha-1)}}\le \mathbb P\left(\sup_{t\in[0,T]}
         \Delta_t\le 1\right)=O(T^{-\frac 1{2\alpha}}) .
\]
\end{coro}

\section{The Matheron-de Marsily model} \label{sec:mdm}
Finally, we will consider particular models of random walks $(M_n)_ {n\ge 0}$ in random environment on $\mathbb Z^2$.
We are namely interested in the survival probability of a particle evolving on a randomly oriented lattice 
introduced by Matheron and de Marsily in \cite{MdM} (see also \cite{BGKPS}) to model fluid transport in a porous stratified medium. 
Supported by physical arguments, numerical simulations and comparison with the fractional Brownian motion, 
Redner \cite{Red2} and Majumdar \cite{Maj} conjectured that the survival probability asymptotically behaves as $n^{-\frac{1}{4}}$. 
In this paper we rigorously prove their conjecture.  

Let us describe more precisely the model and the results. Let us fix $p\in(0,1)$.
The (random) environment will be given by a sequence $\xi=(\xi_k)_{k\in\mathbb Z}$
of i.i.d.\ centered random variables with
values in $\{\pm 1\}$ and defined on the probability space
$(\Omega,\mathcal T,\p)$. Given $\xi$, the position $M$ of the particle is defined as a
$\mathbb Z^2$-random walk on nearest neighbours starting
from $0$ (i.e.\ $\mathbb P^\xi(M_0=0)=1$) and with transition probabilities
$$\mathbb P^\xi(M_{n+1}=(x+ \xi_y,y)|M_n=(x,y))=p,\quad
   \mathbb P^\xi(M_{n+1}=(x,y\pm 1)|M_n=(x,y))=\frac{1-p}2. $$
At site $(x,y)$, the particle can either get down  (or get up) with probability $\frac{1-p}2$ or move with probability $p$ on the $y$-th horizontal line according to its orientation (to the right (resp.\ to the left) if $\xi_y=+1$ (resp.\ if $\xi_y=-1$)).
We will write $\mathbb P$ for the annealed law, that is the
integration of the quenched distribution $\mathbb P^\xi$ with respect to $\p$.

In the sequel, this random walk will be named MdM random walk.
This 2-dimensional random walk in random environment was first studied rigorously in \cite{CP}. They proved that the MdM random walk is transient under the annealed law $\mathbb P$ and under the quenched law $\mathbb P^\xi$ for $\p$-almost every environment $\xi$. It was also proved that it has speed zero.
Actually, the MdM random walk is closely related to RWRS. This fact was first noticed in \cite{GPN}. More precisely its first coordinate can be viewed as a generalized RWRS; the second coordinate being a "lazy random walk" on $\mathbb Z$ (see Section 5 of \cite{TLL} for the details). Using this remark, a functional limit theorem was proved in \cite{GPN} and a local limit theorem was established in \cite{TLL}. More precisely, there exists some constant $C$ only depending on $p$ such that $\mathbb{P}(M_{2n}=(0,0) ) \sim C n^{-\frac 5 4}$.
Since the random walk $M$ {\it does not} have the Markov property under the annealed law, we are not able to deduce the survival probability from the previous local limit theorem.
Nevertheless we will use the fact that $M$ has stationary increments under the annealed law).
Let us define the survival probability as the probability that the particle does not visit the $y-$axis (i.e. the line $x=0$) before time $n$, i.e.\ $\mathbb{P} (T_0^{(1)} >n)$, where 
$$T_0^{(1)}:=\inf\{n\ge 1\ :\ M^{(1)}_n=0\}$$
is the first return time of the first coordinate $M^{(1)}$ of $M$ to $0$.
Due to \cite{GPN}, the first coordinate $M^{(1)}_{\lfloor nt\rfloor}$ normalized by $n^{\frac 34}$ converges in distribution to $K_p\Delta_t^{(0)}$, where $K_p:=\frac p{(1-p)^{\frac 14}}$ and where $\Delta^{(0)}$ is the Kesten-Spitzer process $\Delta$
introduced in the previous sections
with $Y$ and $W$ two independent standard Brownian motions.
As for RWRS, the asymptotic behavior of this probability will be deduced from the range $\mathcal R_n^{(1)}$ of the first coordinate,  i.e.\ the number of vertical lines visited by
$(M_k)_k$ up to time $n$, namely
$$\mathcal R_n^{(1)}:=\#\{x\in\mathbb Z\ :\ \exists k=1,\ldots,n,\ \exists y\in\mathbb Z\ :\ M_k=(x,y)\}.$$

Our main result for the MdM random walk is the following.

\begin{thm}\label{thm:T01}
\begin{equation}\label{EM1}
\lim_{n\rightarrow +\infty} n^{-\frac 34}\mathbb E\left[\max_{k=1,\ldots,n}M_k^{(1)}\right]=K_p\mathbb E\left[\sup_{t\in[0,1]}\Delta_t^{(0)} \right].
\end{equation}
The sequence $(\mathcal R_n^{(1)}/n^{\frac 34})_{n\ge 1}$ converges in distribution
to $K_p\left(\sup_{t\in[0,1]}\Delta_t^{(0)}-\inf_{t\in[0,1]}\Delta_t^{(0)}
\right).$
Moreover
\begin{equation}\label{EEE2c}
\lim_{n\rightarrow+\infty} \frac{\EE[\mathcal R_n^{(1)}]}{n^{\frac 34}}=2K_p\, \mathbb E\left[\sup_{t\in[0,1]}\Delta^{(0)}_t\right]
\end{equation}
and
\begin{equation}\label{EEE3c}
\quad\lim_{n\rightarrow +\infty}  n^{\frac 1 4}\mathbb P(T_0^{(1)}>n)=\frac 32K_p\, \mathbb E\left[\sup_{t\in[0,1]}\Delta^{(0)}_t\right].
\end{equation}
\end{thm}

\begin{proof}
Recall that
$\mathcal R_n^{(1)}=\max\{M_1^{(1)},...,M_n^{(1)}\}+\min\{M_1^{(1)},...,M_n^{(1)}\}+1$.
It has been proved in \cite{GPN} that $((M^{(1)}_{\lfloor nt\rfloor}/n^{\frac 34})_t)_{n\ge 1}$ converges in distribution to $(K_p\Delta_t^{(0)})_t$ in the Skorohod space endowed with the $J_1$-metric. 
Hence $(n^{-\frac 34}(\max_{k=1,\ldots,n}M_k^{(1)}-\min_{\ell=1,\ldots,n}M_\ell^{(1)}))
_{n\ge 1}$ converges in distribution
to $K_p(\sup_{t\in[0,1]}\Delta_t^{(0)}-\inf_{s\in[0,1]}\Delta_s^{(0)})$.
Hence, to prove \eqref{EM1} and \eqref{EEE2c}, it is enough to prove that this sequence is
uniformly integrable. To this end we will prove that
it is bounded in $L^2$.
\\
Recall that the second coordinate of the MdM random walk is a lazy random walk. Let us denote it by $(S_n)_ {n\ge 0}$.
Observe that 
$$M_n^{(1)}:=\sum_{k=1}^n \xi_{S_k}\ind_{\{S_k=S_{k-1}\}}=\sum_{y\in\mathbb Z} \xi_{y}\tilde N_n(y),$$
with $\tilde N_n(y):=\#\{k=1,\ldots,n\ :\ S_k=S_{k-1}=y\}$.
Observe that $\tilde N$ is measurable with respect to the random walk $S$ and that $0\le \tilde N_n(y)\le N_n(y)$,
where again $N_n(y):=\#\{k=1,\ldots,n\ :\ S_k=y\}$.
\\
Conditionally to
the walk $S$, the increments of $(M^{(1)}_n)_ {n\ge 0}$ are centered and positively associated. It follows from Theorem 2.1 of \cite{Gong} that 
\[
\forall\beta\in[1,2),\quad
\mathbb E\left[\left|\max_{j=0,\ldots,n}M^{(1)}_j\right|^\beta\Big|S\right] 
 \le c_{2} \mathbb E\left[|M^{(1)}_n|^2|S\right]^{\beta/2}
 = c_{2} \left(\sum_{y\in\mathbb Z}(\tilde N_n(y))^2\right)^{\beta/2} \le c_{2} (V_n)^{\beta/2},
\]
where again $V_n=\sum_{y\in\mathbb Z}(N_n(y))^2$.
Therefore
$
\mathbb E\left[\left|\max_{j=0,\ldots,n}M^{(1)}_j\right|^\beta\right] \le  c_{2} (\mathbb E[V_n])^{\beta/2}\sim c'n^{\frac {3\beta}4}$
(see \cite[(1.2)]{KS}). This gives \eqref{EEE2c}.
Finally, \eqref{EEE3c} directly follows from \eqref{EEE2c} combined with \eqref{gammaa2} of Theorem \ref{THMZ}.
\end{proof}
\begin{rqe} In the historical model \cite{MdM}, the probability $p$ is equal to $1/3$, and in this particular case the survival probability is similar to $ \kappa n^{-\frac{1}{4}}$ 
where 
$\kappa =\left(\frac{3}{2^5}\right)^{1/4} \mathbb E\left[\sup_{t\in[0,1]}\Delta^{(0)}_t\right]$.
An open question is to compute the value of the above expectation. Numerical simulations give 
$\mathbb E\left[\sup_{t\in[0,1]}\Delta^{(0)}_t\right] \approx 0.54.$
\end{rqe}
\begin{thm}[Persistence probability for the MdM model]\label{mdm}
\[
\mathbb P\left(
         \max_{k=1,\ldots,n} M_k^{(1)}\le -1\right)\sim \frac 34\frac{p}{(1-p)^{\frac 14}}\mathbb E\left[\sup_{t\in[0,1]}\Delta_t^{(0)}\right] n^{-\frac 14},\quad
        \mbox{as }n\rightarrow +\infty.
\]
\end{thm}
This result was conjectured by Redner \cite{Red2} and Majumdar \cite{Maj}.
\begin{proof}
This comes directly from
Theorem \ref{thm:T01} and from the last item of Theorem \ref{THMZ}.
\end{proof}
\begin{rqe}[Range in the historical MdM model]\label{cvpsrange}
It is worth noticing that the range ${\mathcal R}_n$ of the MdM random walk, i.e.\ the number of sites visited by $M$ before time $n$, 
${\mathcal R}_n:=\#\{M_0,\dots, M_{n}\}$ is well understood.
Using %\footnote{
% We consider the ergodic dynamical system $(\tilde\Omega,\tilde\mu,\tilde T)$ given by  $\tilde\Omega:=\{-1,1\}^{\mathbb Z}\times
% \{-1,0,1\}^{\mathbb Z}$, $\tilde\mu:=(\frac{\delta_1+\delta_{-1}}2)^{\otimes\mathbb Z}\otimes(p\delta_0+\frac{1-p}2\delta_1+\frac{1-p}2\delta_{-1})^{\otimes\mathbb Z}$
% and $\tilde T((\xi_k)_k,(\omega_k)_k)=((\xi_{k+\omega_0})_k,
%   (\omega_{k+1})_k)$. We also set $\tilde f((\xi_k)_k,(\omega_k)_k)=(\xi_0,0)$
% if $\omega_0 = 0$, $\tilde f((\xi_k)_k,(\omega_k)_k)=(0,\omega_0)$
% otherwise. 
% We observe that $(M_j)_{j\geq 1}$ has the same distribution under $\mathbb P$
% as $(\sum_{k=0}^{j-1} \tilde f\circ \tilde T^k)_{j\geq 1}$ under $\tilde \mu$.
% }
\cite[Lemma 3.3.27]{Zeitouni}, $({\mathcal R}_n/n)_{n\ge 1}$ converges $\mathbb P$-almost surely to $\mathbb P[M_j\ne 0,\ \forall j\geq 1]$, which contradicts the result announced in \cite{LeNy}.
(There, we consider the ergodic dynamical system $(\Omega,\mu,T)$ given by $\Omega:=\{-1,0,1\}^{\mathbb Z}\times
\{\pm1\}^{\mathbb Z}$, $\mu:=(\mathbb P_{S_1})^{\otimes\mathbb Z}\otimes (\mathbb P_{\xi_1})^{\otimes\mathbb Z}$ and 
$T((\alpha_k)_k,(\epsilon_k)_k):=((\alpha_{k+1})_k,
  (\epsilon_{k+\alpha_0})_k)$ (see for instance \cite{KMC} for its ergodicity, p.\ 162). We set
$f((\alpha_k)_k,(\epsilon_k)_k)=(\epsilon_0 {\bf 1}_{\{\alpha_0=0\}}, \alpha_0)$.
With these choices, $(M_j)_{j\geq 1}$ has the same distribution under $\mathbb P$
as $(\sum_{k=1}^j f\circ T^k)_{j\geq 1}$ under $\mu$.)
\end{rqe}

{\bf Acknowledgement.} We would like to thank the two referees and the associate editor for carefully reviewing our manuscript. Their suggestions significantly improved the presentation of the paper.\\
We are grateful to Fr\'ed\'erique Watbled for her questions
that made us realize some mistakes in our published paper.

\end{document}